\theoremstyle{definition}
\newtheorem{theorem}{Theorem}[section]
\newtheorem{definition}[theorem]{\rm Definition}
\newtheorem{lemma}[theorem]{Lemma}
\newtheorem{proposition}[theorem]{Proposition}
\newtheorem{corollary}[theorem]{Corollary}
\newtheorem{remark}[theorem]{\rm Remark}
\DeclareMathOperator{\Diff}{Diff}
\DeclareMathOperator{\id}{id}
\DeclareMathOperator{\Symp}{Symp}
\DeclareMathOperator{\rel}{rel}
\DeclareMathOperator{\Hom}{Hom}
\DeclareMathOperator{\Ham}{Ham}
\DeclareMathOperator{\grp}{grp}
\DeclareMathOperator{\cell}{cell}
\DeclareMathOperator{\FS}{FS}
\newcommand{\RR}{\mathbb{R}}
\newcommand{\ZZ}{\mathbb{Z}}
\newcommand{\GG}{\Gamma}
\renewcommand{\gg}{\gamma}
\newcommand{\tSymp}{\mathrm{S}\widetilde{\mathrm{ymp}}}
\newcommand{\tHam}{\mathrm{H}\widetilde{\mathrm{am}}}
\title[Dixmier-Douady class, action homomorphism, and group cocycles]{The Dixmier-Douady class, the action homomorphism, and group cocycles on the symplectomorphism group}
\author{Shuhei Maruyama}
\address{Graduate School of Mathematics,
Nagoya University, Japan}
\email{m17037h@math.nagoya-u.ac.jp}
\begin{document}

\begin{abstract}
  Let $X$ be a one-connected and integral symplectic manifold.
  In this paper, we construct and study a two-cocycle and three-cocycle on the symplectomorphism group of $X$.
  In particular, by using these cocycles, we clarify the relationship between Weinstein's action homomorphism and the universal Dixmier-Douady class of flat symplectic fibrations.
\end{abstract}

\maketitle

\section{Introduction}

In \cite{ismagilov_losik_michor06}, Ismagilov, Losik, and Michor constructed an $\RR$-valued group two-cocycle on symplectomorphism groups.
Their cocycle is defined as follows.
Let $(X, \omega = d\eta)$ be a connected and exact symplectic manifold whose first Betti number is zero.
For a base point $x_0 \in X$, the cocycle $c_{x_0, \eta}$ is given by
\[
  c_{x_0, \eta}(g, h) = \int_{x_0}^{h(x_0)} g^*\eta - \eta,
\]
where $\int_{x_0}^{h(x_0)}$ denotes the integration along a path from $x_0$ to $h(x_0)$.
This cocycle is well defined by the assumption on the Betti number.
Moreover, the cohomology class represented by the cocycle is independent of the choice of $x_0$ and $\eta$ (see \cite{ismagilov_losik_michor06}).
Cocycles of this type have been studied in several papers (\cite{MR2945578}, \cite{MR2854098}, \cite{MR4301317}).

In this paper, we construct and study an $S^1$-valued group two-cocycle $a_{x_0, \alpha}$ and a $\ZZ$-valued group three-cocycle $c_{x_0, \sigma, w}$, which are analogous to Ismagilov, Losik, and Michor's two-cocycle $c_{x_0, \eta}$.
These cocycles $a_{x_0, \alpha}$ and $c_{x_0, \sigma, w}$ are defined on the symplectomorphism group $\Symp(X,\omega)$ for a one-connected and integral symplectic manifold $(X, \omega)$.
Here the symplectic manifold is called \textit{integral} if the cohomology class of the symplectic form $\omega$ comes from an integral cohomology class of $X$.

The cocycle $a_{x_0, \alpha}$ is defined as follows.
Since the symplectic manifold is integral, the cohomology class represented by the symplectic form is trivial in the second cohomology group with coefficients in $S^1 = \RR/\ZZ$.
Hence there exists a singular one-cochain $\alpha \in C^1(X;S^1)$ whose coboundary $\delta \alpha$ is equal to the ``symplectic form $\omega$ modulo integer'' (see Subsection \ref{subsec:2cocycle} for detail).
Thus, we can construct an $S^1$-valued cocycle which is analogous to Ismagilov, Losik, and Michor's cocycle.
In fact, for $g, h \in \Symp(X,\omega)$, we set
%we define an $S^1$-valued group two-cocycle $a_{x_0, \alpha}$ on the symplectomorphism group $\Symp(X,\omega)$ by setting
\begin{align*}
  a_{x_0,\alpha}(g,h) = \int_{x_0}^{h(x_0)} g^* \alpha - \alpha,
\end{align*}
where $x_0 \in X$ is a base point.

Let $w$ be a singular two-cocycle with coefficients in $\ZZ$ which represents $[\omega]$.
Then, the group three-cocycle $c_{x_0, \sigma, w}$ is defined as the following form:
\begin{align*}
  c_{x_0, \sigma, w}(f,g,h) = \int_{\Delta_{x_0, \sigma}(g,h)} f^*w - w,
\end{align*}
(see Subsection \ref{subsection:three_cocycle} for the definition of $\Delta_{x_0, \sigma}$).
Note that the cocycle $c_{x_0, \sigma, w}$ takes values in $\ZZ$ since $w$ is a singular cocycle with coefficients in $\ZZ$.

As Ismagilov, Losik, and Michor's two-cocycle, the cohomology classes of $a_{x_0, \alpha}$ and $c_{x_0, \sigma, w}$ are independent of the choice of $x_0$, $\alpha$, $\sigma$, and $w$. %(see Proposition \ref{prop:indep_of_choices}).
Therefore, the group cohomology classes $[a_{x_0, \alpha}] \in H_{\grp}^2(\Symp(X,\omega);S^1)$ and $[c_{x_0, \sigma, w}] \in H_{\grp}^3(\Symp(X,\omega);\mathbb{Z})$ depend only on the symplectic structure on $X$.

\begin{comment}
We give an example that the cocycle $c_{x_0, \sigma, w}$ is cohomologically non-trivial.
\begin{theorem}\label{thm:non-trivial_ch_class}
  Let $n$ be a positive integer.
  For the complex projective space $\mathbb{C}P^n$ with the Fubini-Study form $\omega_{FS}$, the cohomology class of $c_{x_0, \sigma, w}$ is non-zero.
\end{theorem}
\end{comment}

For a group $G$, the cohomology group $H_{\grp}^*(G;\ZZ)$ is isomorphic to the singular cohomology group $H^*(BG^{\delta};\ZZ)$ of the classifying space of the discrete group $G^{\delta}$ (see \cite{brown82}).
Moreover, a cohomology class of the classifying space $BG^{\delta}$ can be seen as a (universal) characteristic class of flat (or foliated) $G$-bundles. %, and so does a group cohomology class of $G$.
Thus, the cohomology class $[c_{x_0, \sigma, w}] \in H_{\grp}^3(\Symp(X,\omega);\mathbb{Z})$ is considered as a characteristic class of flat $\Symp(X, \omega)$-bundles (such fiber bundles are called \textit{flat symplectic fibrations}).
From this point of view, we show the following:

\begin{theorem}\label{main_theorem}
  %Let $(X, \omega)$ be a %closed,
  %one-connected and integral symplectic manifold. Then,
  Let $D \in H^3(B\Symp(X,\omega);\ZZ)$ be the universal Dixmier-Douady class and $\iota^* \colon H^3(B\Symp(X,\omega);\ZZ) \to H_{\grp}^3(\Symp(X,\omega);\ZZ)$ the canonical map.
  Then, the group cohomology class $[c_{x_0, \sigma, w}] \in H_{\grp}^3(\Symp(X,\omega);\mathbb{Z})$ is equal to the negative of the universal Dixmier-Douady class $D^{\delta} = \iota^* D$ of flat symplectic fibrations.
\end{theorem}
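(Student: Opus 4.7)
The plan is to represent $D^{\delta}$ explicitly as a transgression in a suitable double complex and then match that representative, cochain by cochain, with $-c_{x_0,\sigma,w}$. Put $G = \Symp(X,\omega)$ and consider the double complex $C^{p,q} = C^p_{\grp}(G; C^q_{\sing}(X;\ZZ))$ with horizontal differential $d_G$ the inhomogeneous group coboundary and vertical differential $\delta$ the singular coboundary. Its total complex computes $H^*(EG^{\delta} \times_{G^{\delta}} X;\ZZ)$, the cohomology of the total space of the universal flat symplectic fibration, and the associated filtration spectral sequence has $E_2^{p,q} = H^p_{\grp}(G; H^q(X;\ZZ))$. Since $G$ preserves $[\omega]$, the class $[\omega]$ lives in $E_2^{0,2}$; the one-connectedness of $X$ (in particular $H^1(X;\ZZ) = 0$) forces $d_3 \colon E_3^{0,2} \to E_3^{3,0} = H^3_{\grp}(G;\ZZ)$ to be the only possibly nonzero differential on it, and from the definition of $D$ as the obstruction to extending $[\omega]$ to the total space one reads off $d_3[\omega] = -D^{\delta}$.

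I would then compute $d_3[\omega]$ by successive lifts in the double complex. Starting with the $\delta$-closed cochain $w \in C^{0,2}$, first choose $\beta \in C^{1,1}$ with $\delta\beta = d_G w$, which is possible because $(d_G w)(g) = g^*w - w$ is $\delta$-exact for every $g \in G$; next choose $\gamma \in C^{2,0}$ with $\delta\gamma = d_G\beta$, which is possible because $d_G\beta$ is $\delta$-closed in $C^{2,1}$ and $H^1(X;\ZZ) = 0$; finally $d_G\gamma \in C^{3,0}$ is $\delta$-closed, hence a constant $\ZZ$-valued $3$-cocycle on $G$ representing $d_3[\omega]$. The crucial input is that these choices can be made explicitly from $x_0$ and $\sigma$: $\sigma$ furnishes a coherent system of chosen $1$-chains from $x_0$ to each $g(x_0)$ together with $2$-chain fillings $\Delta_{x_0,\sigma}(g,h)$ of the triangle with vertices $x_0$, $g(x_0)$, $gh(x_0)$. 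Using these, I would take $\beta(g)$ to be the specific primitive of $g^*w - w$ determined by integration along $\sigma$-paths, and $\gamma(g,h)$ to be the $0$-cochain whose value at $x_0$ is $\int_{\Delta_{x_0,\sigma}(g,h)} w$. A direct computation of $d_G\gamma(f,g,h)$, together with the boundary relation for $\Delta_{x_0,\sigma}(g,h)$ and Stokes' theorem, then rewrites it as $\int_{\Delta_{x_0,\sigma}(g,h)} (f^*w - w) = c_{x_0,\sigma,w}(f,g,h)$, giving $[c_{x_0,\sigma,w}] = -D^{\delta}$.

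The main obstacle will be the cochain-level bookkeeping in this last step: one must verify that the choices of $\beta$ and $\gamma$ prescribed by $\sigma$ genuinely satisfy the double-complex equations, that no residual correction term appears when passing from $\gamma \in C^{2,0}$ to its evaluation at $x_0$, and that the signs align so as to give precisely $-D^{\delta}$ rather than $+D^{\delta}$. Equivalently, one has to check that the inhomogeneous coboundary $d_G\gamma$ translates into the integral formula for $c_{x_0,\sigma,w}$ by carefully tracking how the $\sigma$-chains transform under left multiplication by $f$; this is straightforward in principle but combinatorially delicate, and it is the only place where the full structure of the $\sigma$-data is used.
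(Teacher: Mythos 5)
Your strategy is sound and arrives at the correct identity, but it is a genuinely different route from the paper's. The paper never computes the transgression of $[\omega]_{\ZZ}$ directly at the cochain level: it factors everything through the prequantum central extension $0 \to S^1 \to Q \to \Symp(X,\omega) \to 1$. Concretely, it proves $[a_{x_0,\alpha}] = -e(Q)$ by building a cochain $\tau_{y_0}$ on $Q$ from the connection form $\theta$ on the prequantization bundle (Theorem \ref{thm:cocycle_description}), proves $[c_{x_0,\sigma,w}] = -\delta[a_{x_0,\alpha}]$ via a Bockstein computation with a real lift $\overline{\alpha}$ of $\alpha$ (Theorem \ref{thm:third_cocycle}), and identifies $D^{\delta}$ with $-\delta(e(Q))$ through naturality arguments comparing the Serre spectral sequences of $X \to E \to B\Symp(X,\omega)$, of $BS^1 \to BQ \to B\Symp(X,\omega)$, and of their discrete counterparts, using the universal first Chern class together with Lemmas \ref{lemma:id_connecting_hom}, \ref{lemma:commutative_diagram}, \ref{lemma:identity-Euler_class}, and \ref{lemma:obstruction-euler_class}. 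Your double-complex computation bypasses $Q$, $\theta$, and the cocycle $a_{x_0,\alpha}$ entirely, which is a real economy if one wants only Theorem \ref{main_theorem}; the paper's detour is what simultaneously produces the intermediate statements needed for Theorem \ref{thm:action_hom_class_a} and Corollary \ref{cor:action_hom_DD}. Two caveats on your version. First, $d_3[\omega]_{\ZZ} = -D^{\delta}$ is not ``the definition'' of $D$ here: $D$ is defined as the obstruction to a prequantum lift, and the identification with $-d_3^{0,2}[\omega]_{\ZZ}$ is Proposition \ref{prop:dd_def_ss} (Carey--Crowley--Murray), which you must invoke, together with naturality under $\iota$, exactly as the paper does. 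Second, the deferred bookkeeping is where the content sits: $\sigma$ alone does not determine $\beta(g)$ (one needs $2$-chain fillings for arbitrary singular $1$-simplices, not only for triangles of $\sigma$-paths), and the inhomogeneous coboundary $d_G\gamma(f,g,h)$ contains the twisted term $f\cdot\gamma(g,h)$, whose evaluation at $x_0$ reads off $\gamma(g,h)$ at a point other than $x_0$, so the normalization of the $0$-cochain $\gamma(g,h)$ away from $x_0$ genuinely enters. These issues are surmountable --- since $E_2^{p,1}=0$, the class of $d_G\gamma$ in $E_3^{3,0}=H_{\grp}^3(\Symp(X,\omega);\ZZ)$ is independent of all choices, and Proposition \ref{prop:indep_of_choices} gives the corresponding independence for $[c_{x_0,\sigma,w}]$, so it suffices to exhibit a single set of choices for which the two cocycles agree up to coboundary --- but that verification is the whole proof and is not yet carried out in what you wrote.
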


Note that the universal Dixmier-Douady class $D \in H^3(B\Symp(X,\omega);\ZZ)$ is a characteristic class defined as an obstruction to the existence of a prequantum structure on fibrations (see Section \ref{sec:DD_class}, Definition \ref{def:DD_class}).

Let $\Ham_c(X,\omega)$ be the compactly supported Hamiltonian diffeomorphism group.
In \cite{MR990190}, Weinstein defined a homomorphism $\mathbf{A}$ on the fundamental group of $\Ham_c(X,\omega)$, which is called the \textit{action homomorphism}.
In our setting, the target of the action homomorphism is the circle $S^1 = \RR/\ZZ$.
Note that the first cohomology group is isomorphic to the space of homomorphism to the coefficients.
Thus the action homomorphism $\mathbf{A}$ is considered as an element of $H_{\grp}^1(\pi_1(\Ham_c(X,\omega));S^1)$.

A derivation in the Hochschild-Serre spectral sequence for the group extension
\[
  0 \to \pi_1(\Ham_c(X,\omega)) \to \tHam_c(X,\omega) \to \Ham_c(X,\omega) \to 1
\]
defines a homomorphism
\[
  d_2^{0,1} \colon H_{\grp}^1(\pi_1(\Ham_c(X,\omega));S^1) \to H_{\grp}^2(\Ham_c(X,\omega);S^1).
\]
By this map, we obtain a second group cohomology class $d_{2}^{0,1}(\mathbf{A}) \in H_{\grp}^2(\Ham_c(X,\omega);S^1)$.
For the inclusion $i \colon \Ham_c(X,\omega) \to \Symp(X,\omega)$, let
\[
  i^* \colon H_{\grp}^2(\Symp(X,\omega);S^1) \to H_{\grp}^2(\Ham_c(X,\omega);S^1)
\]
be the pullback.
Then the following holds.

\begin{theorem}\label{thm:action_hom_class_a}
  The cohomology class $d_2^{0,1}(\mathbf{A})$ is equal to the negative of the class $i^*[a_{x_0, \alpha}]$.
\end{theorem}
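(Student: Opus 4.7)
The plan is to unwind the transgression $d_2^{0,1}$ explicitly and match it against $i^*[a_{x_0,\alpha}]$ by rewriting Weinstein's action through the $S^1$-valued primitive $\alpha$.

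First I would pick, for every $g \in \Ham_c(X,\omega)$, a Hamiltonian isotopy $\{g_t\}_{t \in [0,1]}$ from $\id$ to $g$ generated by $F^g_t$, and set the set-theoretic section $s(g) = [\{g_t\}] \in \tHam_c(X,\omega)$. The associated $\pi_1(\Ham_c(X,\omega))$-valued $2$-cocycle of the central extension
\[
0 \to \pi_1(\Ham_c(X,\omega)) \to \tHam_c(X,\omega) \to \Ham_c(X,\omega) \to 1
\]
is $\sigma(g,h) = s(g)s(h)s(gh)^{-1}$, represented by the concatenated loop $\gg_g \ast (g \cdot \gg_h) \ast \overline{\gg_{gh}}$, where $\gg_g(t) = g_t$. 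The Hochschild--Serre formalism then reduces the theorem to showing
\[
\mathbf{A}(\sigma(g,h)) \equiv a_{x_0,\alpha}(g,h) \pmod{\text{coboundary on } \Ham_c(X,\omega)},
\]
with signs matched to those implicit in the definition of $d_2^{0,1}$.

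The second step is to rewrite Weinstein's action via $\alpha$. For any Hamiltonian loop $\gg$ with generator $H_t$ and orbit $c(t) = \gg_t(x_0)$, one-connectedness gives a cap $D$ with $\partial D = c$; since $\delta \alpha \equiv \omega \pmod \ZZ$, Stokes' theorem for singular cochains yields $\int_D \omega \equiv \int_c \alpha \pmod \ZZ$, so
\[
\mathbf{A}([\gg]) \equiv \int_0^1 H_t(c(t))\,dt - \int_c \alpha \pmod \ZZ.
\]
Applied to $\sigma(g,h)$, the integrals split into three pieces corresponding to $\gg_g$, $g\cdot \gg_h$, and $\overline{\gg_{gh}}$. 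Using $\int_{g\cdot \gg_h(\cdot)(x_0)} \alpha = \int_{\gg_h(\cdot)(x_0)} g^*\alpha$ together with the analogous identity for the Hamiltonian generating $g\cdot \gg_h$, the six contributions regroup: five of them assemble into the coboundary $\delta f$ of the $1$-cochain
\[
f(g) = \int_0^1 F^g_t(g_t(x_0))\,dt - \int_{\gg_g(\cdot)(x_0)} \alpha,
\]
and the residual cross-term is $\int_{\gg_h(\cdot)(x_0)}(g^*\alpha - \alpha)$. Since $X$ is one-connected the $S^1$-valued cocycle $g^*\alpha - \alpha$ is exact, so this integral depends only on the endpoints $x_0$ and $h(x_0)$ and by definition equals $a_{x_0,\alpha}(g,h)$. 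Combined with the transgression formula, this gives $d_2^{0,1}(\mathbf{A}) = -i^*[a_{x_0,\alpha}]$.

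The principal obstacle will be the careful sign bookkeeping required throughout: the sign convention of the transgression $d_2^{0,1}$ in Hochschild--Serre, the normalization of Hamiltonian vector fields, the concatenation ordering in $\tHam_c(X,\omega)$, and the orientation of the $1$-simplex $t \mapsto h_t(x_0)$ must all be fixed at the outset so that the ``negative'' of Theorem~\ref{thm:action_hom_class_a} appears with the correct sign. A secondary, more conceptual point is verifying that $f$, which is a priori defined through the choice of Hamiltonian isotopy, can be regarded as a $1$-cochain on $\Ham_c(X,\omega)$ via the section $s$: a change of isotopy shifts $f$ by a map into $\mathbf{A}(\pi_1(\Ham_c(X,\omega)))$, and the resulting change in $\delta f$ is absorbed into the change of the cocycle $\sigma$, so the final cohomological identity is independent of all choices.
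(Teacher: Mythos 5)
Your proposal is essentially the paper's own argument in different packaging: the $1$-cochain $f$ you extract from the choice of isotopies is (up to sign) the paper's extension $\mathcal{A}_{x_0,\alpha}$ of $\mathbf{A}$ to $\tHam_c(X,\omega)$ precomposed with the section $s$, and the identification of the residual cross-term $\int_{\{h_t(x_0)\}}(g^*\alpha-\alpha)$ with $a_{x_0,\alpha}(g,h)$ (using that $g^*\alpha-\alpha$ is an exact $S^1$-cocycle on the one-connected $X$) is exactly the content of the paper's computation of $\delta\mathcal{A}_{x_0,\alpha}$; the paper then invokes its explicit description of the transgression rather than the extension cocycle $\sigma$, but these are the same mechanism. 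The one substantive problem is the sign you yourself flag: the paper's convention is $\mathbf{A}([\varphi_t])=\int_{\Delta_{x_0}}\omega-\int_0^1 H_t(\varphi_t(x_0))\,dt$, so the Stokes rewriting should be $\mathbf{A}([\gamma])\equiv\int_c\alpha-\int_0^1H_t(c(t))\,dt$, the \emph{negative} of your displayed formula. As written, your convention makes the cross-term $-a_{x_0,\alpha}(g,h)$, and combined with the transgression convention $d_2^{0,1}(\varphi)=-[\varphi\circ\sigma]$ (which is what the paper's computation of $d_2^{0,1}(\id_M)=-e(\Gamma)$ gives, and which your reduction ``show $\mathbf{A}\circ\sigma\equiv a_{x_0,\alpha}$'' tacitly presupposes) this would output $+i^*[a_{x_0,\alpha}]$ instead of the asserted negative. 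With the paper's sign for $\mathbf{A}$ the six terms regroup exactly as you describe and the statement follows; so fix that one formula and the argument is complete.
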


The short exact sequence $0 \to \ZZ \to \RR \to S^1 \to 0$ induces the cohomology long exact sequence
\[
  \cdots \to H_{\grp}^2(\Ham_c(X,\omega);\RR) \to H_{\grp}^2(\Ham_c(X,\omega);S^1) \xrightarrow{\mathbf{d}} H_{\grp}^3(\Ham_c(X,\omega);\ZZ) \to \cdots,
\]
where $\mathbf{d}$ is the connecting homomorphism.
Because the class $\mathbf{d}([a_{x_0, \alpha}])$ is equal to $-[c_{x_0, \sigma, w}]$ (see Theorem \ref{thm:third_cocycle}), we obtain the following:
\begin{corollary}\label{cor:action_hom_DD}
  The action homomorphism $\mathbf{A}$ corresponds to the negative of the universal Dixmier-Douady class $D^{\delta}$ on $\Ham_c(X,\omega)$ under the map
  \[
    \mathbf{d} \circ d_2^{0,1} \colon H_{\grp}^1(\pi_1(\Ham_c(X, \omega));S^1) \to H_{\grp}^3(\Ham_c(X,\omega);\ZZ).
  \]
\end{corollary}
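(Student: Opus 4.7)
The plan is to chain together the three main results already stated in the paper and track the signs. The map $\mathbf{d}\circ d_2^{0,1}$ factors through $H^2_{\grp}(\Ham_c(X,\omega);S^1)$, so it suffices to compute the image of $\mathbf{A}$ stepwise, first under $d_2^{0,1}$, then under $\mathbf{d}$, using (in order) Theorem \ref{thm:action_hom_class_a}, Theorem \ref{thm:third_cocycle}, and Theorem \ref{main_theorem}.

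First I would apply Theorem \ref{thm:action_hom_class_a} to rewrite
\[
  d_2^{0,1}(\mathbf{A}) = -\,i^*[a_{x_0,\alpha}] \in H^2_{\grp}(\Ham_c(X,\omega);S^1).
\]
Next I would apply the connecting homomorphism $\mathbf{d}$ associated with the coefficient sequence $0 \to \ZZ \to \RR \to S^1 \to 0$. The key auxiliary observation is that $\mathbf{d}$ is natural with respect to the pullback by the group homomorphism $i \colon \Ham_c(X,\omega) \hookrightarrow \Symp(X,\omega)$, since the connecting map of a coefficient long exact sequence is defined cochain-by-cochain. Hence $\mathbf{d}(i^*[a_{x_0,\alpha}]) = i^*\mathbf{d}([a_{x_0,\alpha}])$, and by Theorem \ref{thm:third_cocycle} the right-hand side equals $-\,i^*[c_{x_0,\sigma,w}]$.

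Finally I would invoke Theorem \ref{main_theorem} to substitute $[c_{x_0,\sigma,w}] = -D^{\delta}$ in $H^3_{\grp}(\Symp(X,\omega);\ZZ)$ and pull back by $i^*$, yielding
\[
  \mathbf{d}\circ d_2^{0,1}(\mathbf{A}) = -\,\mathbf{d}(i^*[a_{x_0,\alpha}]) = i^*[c_{x_0,\sigma,w}] = -\,i^*D^{\delta},
\]
which is precisely the negative of $D^{\delta}$ restricted to $\Ham_c(X,\omega)$. I do not expect any serious obstacle in this argument: the only non-citation step is the naturality of $\mathbf{d}$ under $i^*$, which is a standard property of the Bockstein-type connecting homomorphism, so the corollary is a formal consequence of the three preceding theorems together with a sign tally.
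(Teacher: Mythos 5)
Your proposal is correct and is essentially the same argument the paper gives: the corollary is derived by chaining Theorem \ref{thm:action_hom_class_a}, Theorem \ref{thm:third_cocycle}, and Theorem \ref{main_theorem} in that order, and your sign bookkeeping ($d_2^{0,1}(\mathbf{A}) = -i^*[a_{x_0,\alpha}]$, $\mathbf{d}[a_{x_0,\alpha}] = -[c_{x_0,\sigma,w}]$, $[c_{x_0,\sigma,w}] = -D^{\delta}$) checks out. The naturality of the connecting homomorphism under $i^*$ that you flag is indeed the only unstated ingredient, and it holds for the standard cochain-level reason you give.
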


In other words, Corollary \ref{cor:action_hom_DD} states the following;
the action homomorphism $\mathbf{A}$, %\in H_{\grp}^1(\pi_1(\Ham_c(X, \omega));S^1)$
the Dixmier-Douady classes $D$ and $D^{\delta}$, and the cocycles $a_{x_0, \alpha}$ and $c_{x_0, \sigma, w}$ of Ismagilov, Losik, and Michor's type are correspond each others in the following diagram:
\[
\xymatrix{
  H^3(B\Ham_c(X,\omega);\ZZ) \ar[r]^-{\iota^*} & H_{\grp}^3(\Ham_c(X,\omega);\ZZ) \\
  H_{\grp}^1(\pi_1(\Ham_c(X,\omega));S^1) \ar[r]^-{d_2^{0,1}} & H_{\grp}^2(\Ham_c(X,\omega);S^1) \ar[u]_-{\mathbf{d}}.
}
\]

Note that the corollary is analogous to Tsuboi's theorem in \cite{tsuboi00}, which states that the Calabi homomorphism on the symplectomorphism group of the disk corresponds to the Euler class on the orientation preserving diffeomorphism group $\Diff_+(S^1)$ under the map
\[
  d_2^{0,1} \colon H_{\grp}^1(\Symp(D,\omega)_{\rel};\RR) \to H_{\grp}^2(\Diff_+(S^1);\RR).
\]
Here the map $d_2^{0,1}$ is the derivation map in the Hochschild-Serre spectral sequence of the group extension $1 \to \Symp(D,\omega)_{\rel} \to \Symp(D,\omega) \to \Diff_+(S^1) \to 1$.

The organization of the present paper goes as follows:
In Section \ref{sec:Preliminaries}, we review group cohomology and the Hochschild-Serre spectral sequence.
%The description of the derivation map given in this section is the main tool of this paper.
In Section \ref{sec:cocycles}, the precise definitions of the cocycles are given.
In Section \ref{sec:action_hom}, we extend the action homomorphism to a map from the universal covering group.
Moreover, we prove Theorem \ref{thm:action_hom_class_a}.
In Section \ref{sec:prequantum_extension}, we explain a relation between the cocycle $a_{x_0, \alpha}$ and the prequantum extension of $\Symp(X,\omega)$.
In Section \ref{sec:DD_class}, we show Theorem \ref{main_theorem}.
In Section \ref{sec:examples}, we give an example where the classes treated in this paper are non-zero.

\section{Preliminaries}\label{sec:Preliminaries}
\subsection{Group cohomology}\label{subsec:group_coh}
Let $G$ be a group and $M$ a trivial $G$-module.
Then the set of all functions $C_{\grp}^p(G;M) = \{ c \colon G^p \to M \}$ from $p$-fold product $G^p$ to $M$ is called the {\it $p$-cochain group of $G$}.
The coboundary map $\delta \colon C_{\grp}^p(G;M) \to C_{\grp}^{p+1}(G;M)$ is defined by
\begin{align*}
  \delta c (g_1, \dots, g_{p+1}) =& c(g_2, \dots, g_{p+1}) + \sum_{i = 1}^{p}(-1)^i c(g_1, \dots, g_i g_{i+1}, \dots, g_{p+1}) \\
  & +(-1)^{p+1}c(g_1, \dots, g_p)
\end{align*}
for $p > 0$ and $\delta = 0$ for $p = 0$.
The cohomology of the cochain complex $(C_{\grp}^*(G;M),\delta)$ is  denoted by $H_{\grp}^*(G;M)$ and called the {\it group cohomology of $G$}.

For a group $G$, the group cohomology $H_{\grp}^*(G;M)$ is isomorphic to the singular cohomology $H^*(BG^{\delta};M)$ of the classifying space $BG^{\delta}$, where $G^{\delta}$ denotes the group $G$ with discrete topology (see \cite{brown82}).
%If we use the fat realization of the nerve $NG^{\delta}$ as a model of the classifying space $BG^{\delta}$, then the isomorphism above are obtained from the isomorphism between the cell complex $C_{\cell}^*(BG^{\delta};A)$ of $BG^{\delta}$ and the complex $C_{\grp}^*(G;A)$ (see \cite[Chapter $5$]{dupont78} for example).

For a topological group $G$, the identity homomorphism $G^{\delta} \to G$ induces a continuous map $\iota \colon BG^{\delta} \to BG$.
%About the induced map $\iota^* \colon H^*(BG;\mathbb{Z}) \to H^*(BG^{\delta};\mathbb{Z})$ of cohomology,
Then, the following theorem is known, which we will use in the proof of Theorem \ref{thm:non-trivial_ch_class}.

\begin{theorem}[\cite{milnor83}]\label{theorem:milnor_made_discrete}
  Let $G$ be a finite dimensional Lie group with finite connected
  components.
  Then the pullback $\iota^* \colon H^*(BG;\mathbb{Z}) \to H^*(BG^{\delta};\mathbb{Z})$ is injective.
\end{theorem}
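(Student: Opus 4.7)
The strategy is to reduce to a compact connected Lie group, invoke the key content of Milnor's paper---that $\iota^*$ is an isomorphism on cohomology with any finite coefficients---and then deduce integral injectivity from the fact that $H^*(BG;\ZZ)$ is finitely generated in each degree.

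The reduction proceeds in two stages. The short exact sequence $1 \to G_0 \to G \to \pi_0(G) \to 1$ with $\pi_0(G)$ finite yields Serre fibrations $BG_0 \to BG \to B\pi_0(G)$ and $BG_0^\delta \to BG^\delta \to B\pi_0(G)$; a morphism of the associated Serre spectral sequences reduces the claim to $G_0$. For connected $G$, the classical structure theorem for Lie groups gives a deformation retraction of $G$ onto a maximal compact subgroup $K$ (equivalently, $G/K$ is diffeomorphic to a Euclidean space), so $BG \simeq BK$ and one may assume that $G$ is compact and connected.

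For such $G$, Milnor's mod-$p$ theorem asserts that $\iota^* \colon H^*(BG;A) \to H^*(BG^\delta;A)$ is an isomorphism for every finite abelian group $A$. Given a nonzero $\alpha \in H^n(BG;\ZZ)$, finite generation of $H^n(BG;\ZZ)$ guarantees a prime $p$ and an integer $m \geq 1$ such that the reduction of $\alpha$ modulo $p^m$ is nonzero in $H^n(BG;\ZZ/p^m)$. Applying Milnor's isomorphism with $A = \ZZ/p^m$ together with the naturality of mod-$p^m$ reduction, the corresponding reduction of $\iota^*\alpha$ in $H^n(BG^\delta;\ZZ/p^m)$ is nonzero, whence $\iota^*\alpha \neq 0$.

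The main obstacle is Milnor's mod-$p$ comparison theorem itself, whose proof rests on substantial algebraic $K$-theory input---notably Suslin's rigidity theorem and homological stability for linear groups over algebraically closed fields---and forms the principal technical content of \cite{milnor83}. In the present application it is used as a black box, and the remaining steps above are essentially formal.
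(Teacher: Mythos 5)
The paper does not prove this statement at all: it is quoted verbatim from Milnor's \emph{On the homology of Lie groups made discrete} and used as a black box, so the only thing to assess is whether your proposed proof is itself correct. It is not, and the failure is at the central step.

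The assertion you call ``Milnor's mod-$p$ theorem'' --- that $\iota^*\colon H^*(BG;A)\to H^*(BG^{\delta};A)$ is an isomorphism for finite coefficients $A$ and every (compact connected) Lie group $G$ --- is not a theorem. It is the Friedlander--Milnor isomorphism conjecture, which remains open in general; in Milnor's paper it is proved only for Lie groups whose identity component is \emph{solvable}. Suslin's rigidity theorem, which you cite as the technical input, gives the stable statement for $GL(\CC)=\varinjlim GL_n(\CC)$, not the unstable statement for an arbitrary compact group; in particular it does not cover $PU(n)$, which is exactly the group to which the present paper applies Theorem \ref{theorem:milnor_made_discrete}. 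So your argument derives an unconditional theorem from an open conjecture. The actual unconditional proof of injectivity is different: after replacing $G$ by a maximal compact subgroup $K$ (the Cartan--Iwasawa--Malcev theorem makes $BG\simeq BK$ and handles the finitely-many-components case in one stroke, so your preliminary comparison of Serre spectral sequences over $B\pi_0(G)$ is both unnecessary and, as stated, unjustified --- injectivity on $E_2$-pages does not propagate to the abutment), one uses the Becker--Gottlieb transfer for $BN\to BK$, where $N$ is the normalizer of a maximal torus and $\chi(K/N)=1$, to split $H^*(BK;\ZZ)$ off of $H^*(BN;\ZZ)$. The identity component of $N$ is a torus, hence solvable, so the isomorphism conjecture \emph{is} a theorem for $N$; only at that point does your finite-generation/mod-$p^m$ reduction argument (which is fine as far as it goes) legitimately apply, and the conclusion is pulled back to $K$ and $G$ by naturality of $\iota$ and the transfer splitting. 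Without the transfer reduction to $N$, the proof does not close.
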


The second group cohomology of $G$ is closely related to the central extensions of $G$.
An exact sequence $1 \to M \xrightarrow{i} \Gamma \xrightarrow{\pi} G \to 1$ of groups is called a {\it central $M$-extension of} $G$ if the image $i(M)$ is contained in the center of $\Gamma$.
It is known that the second group cohomology $H_{\grp}^2(G;M)$ is bijective to the set of equivalence classes of central $M$-extensions of $G$;
\[
  H_{\grp}^2(G;M) \cong \{ \text{central $M$-extensions of $G$}\} / \{ \text{splitting extensions} \}
\]
(see \cite{brown82}).
For a central $M$-extension $\Gamma$, the corresponding cohomology class $e(\Gamma)$ is defined as follows.
Let $s \colon G \to \Gamma$ be a section of the projection $\pi \colon \Gamma \to G$.
For any $g, h \in G$, the value $s(g)s(h)s(gh)^{-1}$ is in $i(M) \cong M$.
By setting
\begin{align}\label{euler_cocycle}
  c(g,h) = s(g)s(h)s(gh)^{-1},
\end{align}
we obtain a two-cochain $c \in C_{\grp}^2(G;M)$ on $G$.
It is easily seen that the cochain $c$ is a cocycle, and its cohomology class $[c]$ does not depend on the choice of sections.
We set $e(\Gamma) = [c] \in H_{\grp}^2(G;M)$.
This cohomology class $e(\Gamma)$ is the class that corresponds to the central $M$-extension $\Gamma$.

\begin{lemma}\label{lemma:id_connecting_hom}
  Let $H_{\grp}^1(S^1;S^1) \xrightarrow{\delta}
  H_{\grp}^2(S^1;\mathbb{Z})$ be the connecting homomorphism and $e(\mathbb{R}) \in H_{\grp}^2(S^1;\mathbb{Z})$ be the cohomology class corresponding to the central $\mathbb{Z}$-extension $0 \to \mathbb{Z} \to \mathbb{R} \to S^1 \to 0$.
  Then, the class $\delta(\id_{S^1})$ is equal to $-e(\mathbb{R})$.
\end{lemma}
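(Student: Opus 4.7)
The plan is to pick a single set-theoretic section $s \colon S^1 \to \mathbb{R}$ of the projection $\pi \colon \mathbb{R} \to S^1$ and write down explicit $\mathbb{Z}$-valued 2-cocycle representatives of both classes in terms of $s$. Such a section exists because $\pi$ is surjective as a map of sets; fix one and use it for both computations.

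First, I would compute $e(\mathbb{R})$ from the general formula \eqref{euler_cocycle} applied to the central extension $0 \to \mathbb{Z} \to \mathbb{R} \to S^1 \to 0$. Writing everything additively (since all the groups involved are abelian), this formula gives the representative
\[
c(g, h) = s(g) + s(h) - s(gh),
\]
which takes values in $\mathbb{Z}$ because its image under $\pi$ is $g + h - (g + h) = 0 \in S^1$.

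Next, to compute $\delta(\id_{S^1})$ I would run through the definition of the connecting homomorphism arising from $0 \to \mathbb{Z} \to \mathbb{R} \to S^1 \to 0$. The key point is that the same section $s$ already serves as an $\mathbb{R}$-valued lift of the 1-cocycle $\id_{S^1} \in Z^1_{\grp}(S^1; S^1)$, since $\pi \circ s = \id_{S^1}$. Applying the group cohomology coboundary formula recalled in Subsection \ref{subsec:group_coh} yields
\[
(\delta s)(g, h) = s(h) - s(gh) + s(g),
\]
which again takes values in $\mathbb{Z} \subset \mathbb{R}$ and, by definition of the connecting homomorphism, represents $\delta(\id_{S^1})$.

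Finally, I would compare the two resulting cocycles. They are built from the same section $s$, and the comparison reduces to tracking the sign convention used in defining the connecting homomorphism in the long exact sequence from $0 \to \mathbb{Z} \to \mathbb{R} \to S^1 \to 0$; this is the only place a sign enters, and it yields the claimed identity $\delta(\id_{S^1}) = -e(\mathbb{R})$. The whole argument is essentially a direct unpacking of the two definitions with a common choice of section, so there is no substantive obstacle—only the bookkeeping of the sign in the snake-lemma convention.
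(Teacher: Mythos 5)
Your setup is exactly the paper's: the paper also fixes a single section (the standard one, $l\colon S^1=\RR/\ZZ\to[0,1)\subset\RR$) and uses it both as the section defining the extension cocycle and as the lift of $\id_{S^1}$ in the connecting homomorphism. The problem is that the one nontrivial point of the lemma --- the minus sign --- is precisely the step you do not carry out. Look at the two cochains you wrote down: $c(g,h)=s(g)+s(h)-s(gh)$ and $(\delta s)(g,h)=s(h)-s(gh)+s(g)$. These are term-by-term \emph{equal}. So if, as you say, the connecting homomorphism is computed by ``lift the $S^1$-valued cocycle to an $\RR$-valued cochain and apply the group coboundary'' (the standard snake-lemma recipe, and the one the paper itself uses elsewhere, e.g.\ $\delta[c]=[\delta(l\circ c)]$ in the proof of Lemma \ref{lemma:commutative_diagram}), then your own computation gives $\delta(\id_{S^1})=[\delta s]=[c]=e(\RR)$, i.e.\ the \emph{opposite} of the claimed sign. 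Asserting that ``tracking the sign convention \dots yields the claimed identity'' is not a proof: you must either exhibit the specific convention that introduces a $-1$ (and there is no degree-dependent sign in the recipe you describe), or locate the sign elsewhere. Since $e(\RR)$ is not $2$-torsion in $H_{\grp}^2(S^1;\ZZ)$, the sign genuinely matters and cannot be waved away.

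To repair this you need to pin down, explicitly, which convention for the connecting homomorphism is in force and verify that it produces $-[\delta s]$ rather than $[\delta s]$; as written, your displayed formulas prove $\delta(\id_{S^1})=\pm e(\RR)$ with the evidence pointing to $+$. (For what it is worth, the paper's own proof is equally terse at this exact spot --- it asserts ``the class $-[\delta l]$ is equal to $e(\RR)$'' immediately after noting $\delta(\id_{S^1})=[\delta l]$ --- so this is the point that most needs an honest computation, not a deferral to an unnamed convention.)
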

\begin{proof}
  Let $l\colon S^1 = \mathbb{R}/\mathbb{Z} \to [0,1) \subset \mathbb{R}$ be the (uniquely defined) section of the projection $\mathbb{R} \to S^1$.
  By the definition of the connecting homomorphism, we have $\delta(\id_{S^1}) = [\delta l] \in
  H_{\grp}^2(S^1;\mathbb{Z})$.
  By the definition of the cocycle (\ref{euler_cocycle}) of $e(\mathbb{R})$, the class $-[\delta l]$ is equal to $e(\mathbb{R})$.
\end{proof}

\subsection{Hochschild-Serre spectral sequence}

For a group extension, there exists a spectral sequence called the \textit{Hochschild-Serre spectral sequence}.

\begin{theorem}[\cite{hochschild_serre53}]\label{thm:HS_s.s}
  For a group extension $1 \to K \to \GG \xrightarrow{\pi} G \to 1$ and a trivial $\GG$-module $M$, there exists a first quadrant spectral sequence $(E_r^{p,q},d_r^{p,q})$ with $E_2^{p,q} \cong H^p(G;H^q(K;M))$ which converges to $H^{p+q}(\GG;M)$.
\end{theorem}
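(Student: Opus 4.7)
The plan is to derive this result as a Grothendieck spectral sequence for the composition of two left-exact functors. For the normal subgroup $K \triangleleft \GG$ with quotient $G = \GG/K$, the $K$-invariants of a $\GG$-module naturally carry a $G$-action, giving the factorization
\begin{equation*}
  (-)^{\GG} = (-)^{G} \circ (-)^{K} \colon \GG\text{-Mod} \to G\text{-Mod} \to \text{Ab}.
\end{equation*}
Since group cohomology $H^{*}(H;-)$ is the right derived functor of $H$-invariants, the Grothendieck spectral sequence for such a composition will produce the desired first-quadrant spectral sequence with the stated $E_{2}$-page, provided a single acyclicity hypothesis is met.

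The technical heart of the argument is the verification that $(-)^{K}$ sends injective $\GG$-modules to $(-)^{G}$-acyclic (in fact injective) $G$-modules. I would establish this via the standard coinduction argument: every injective $\GG$-module is a direct summand of a coinduced module of the form $\Hom_{\ZZ}(\ZZ\GG, A)$ for some abelian group $A$; taking $K$-invariants of such a module yields $\Hom_{\ZZ}(\ZZ G, A)$, which is a coinduced $G$-module and therefore injective. Direct summands of injectives remain injective, so the hypothesis holds. This preservation step is the main obstacle, though it is standard homological algebra.

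Once acyclicity is secured, the Grothendieck machinery immediately produces
\begin{equation*}
  E_{2}^{p,q} = R^{p}\bigl((-)^{G}\bigr)\bigl(R^{q}\bigl((-)^{K}\bigr)(M)\bigr) = H^{p}(G; H^{q}(K;M)) \Rightarrow H^{p+q}(\GG; M),
\end{equation*}
where the $G$-module structure on $H^{q}(K;M)$ is the one induced by conjugation of $K$ by lifts of elements of $G$ into $\GG$; this action is well-defined on cohomology because inner automorphisms act trivially on $H^{*}(K;-)$. Note that even though $M$ is a trivial $\GG$-module, the coefficient module $H^{q}(K;M)$ need not be a trivial $G$-module, so the $E_{2}$-page is genuine group cohomology with twisted coefficients.

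For readers preferring a concrete construction, I would also sketch the double complex approach: combine the bar resolutions into a double complex $\Hom(B_{\bullet}(G) \otimes B_{\bullet}(K), M)$ with appropriate $\GG$-equivariance, and consider the two canonical filtrations. One filtration collapses to $H^{*}(\GG;M)$, while the other has $E_{2}$-page equal to $H^{p}(G; H^{q}(K;M))$. This avoids the abstract acyclicity verification at the cost of careful bookkeeping for the induced actions.
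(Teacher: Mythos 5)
Your argument is correct, but it takes a genuinely different route from the one the paper relies on. The paper simply cites Hochschild--Serre and then works with their original construction: the decreasing filtration $A_j^{*n}$ of the normalized cochain complex $A^n(\GG;M)$ by how many of the last arguments a cochain sees only through $\pi$, with $Z_r^{p,q}$ and $E_r^{p,q}$ defined explicitly from that filtration as in (\ref{eq:def_of_Zrpq}) and (\ref{eq:def_of_Erpq}). Your route is the Grothendieck spectral sequence for the composition $(-)^{\GG} = (-)^{G}\circ(-)^{K}$, and your key acyclicity step is sound: restriction to $\ZZ$ preserves injectives (its left adjoint $\ZZ\GG\otimes_{\ZZ}-$ is exact), so an injective $\GG$-module is a divisible summand of a coinduced module, whose $K$-invariants give $\Hom_{\ZZ}(\ZZ G,A)$, which is injective (and in any case $G$-acyclic by Shapiro's lemma, which is all you need). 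You are also right to flag that the $G$-action on $H^q(K;M)$ by conjugation can be nontrivial even for trivial $M$. What the comparison buys: your derived-functor argument is shorter and conceptually cleaner, but it produces the spectral sequence only up to isomorphism and gives no canonical cochain-level model; the paper needs the explicit filtration precisely because everything downstream (Lemma \ref{lem:transgression_description}, Lemmas \ref{lemma:commutative_diagram} and \ref{lemma:identity-Euler_class}, and the cocycle computations in Sections \ref{sec:action_hom}--\ref{sec:DD_class}) manipulates concrete representatives of transgressions. Your double-complex sketch would partially recover that computability, but it is still not the filtration the paper's later lemmas quote, so if you wanted your proof to substitute for the citation here you would additionally have to check that your $E_2$ and $d_2$, $d_3$ agree with the ones described via $\phi$ in (\ref{can_isom_E_2}).
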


\begin{remark}
  The Hochschild-Serre spectral sequence of a group extension
  \begin{align*}
    1 \to K \to \GG \xrightarrow{\pi} G \to 1,
  \end{align*}
  is isomorphic to the Serre spectral sequence of the fibration
  \begin{align*}
    BK^{\delta} \to B\GG^{\delta} \to BG^{\delta}
  \end{align*}
  of the classifying spaces, where $K^{\delta}$, $\GG^{\delta}$, and $G^{\delta}$ denote the topological groups with discrete topology (see \cite{MR1634407} for example).
\end{remark}

We briefly recall the definition of the derivations in Hochschild-Serre spectral sequence (see \cite{hochschild_serre53} for details).
Let $A^n(\GG;M)$ be the set of all normalized cochains on $\GG$, that is, any cochain $f$ in $A^n(\GG;M)$ satisfies
\[
  f(\gg_1, \dots, \gg_n) = 0
\]
whenever $\gg_j = 1_{\GG}$ for some $1 \leq j \leq n$, where $1_{\GG}$ is the unit element of $\GG$.
We define subsets $A_j^{*n} \subset A^n(\GG;M)$ by
\begin{align*}
  A_j^{*n} = \{ f \in A^n(\GG;M) \mid f(\gg_1,& \dots, \gg_n) \text{ depends only on }\\
  &\gg_1, \dots, \gg_{n-j} \text{ and } \pi(\gg_{n-j+1}), \dots, \pi(\gg_n) \}
\end{align*}
for $0 \leq j \leq n$, $A_j^{*n} = A^n(\GG;M)$ for $j < 0$, and $A_j^{*n} = 0$ for $j > n$.
Then, subsets $A_j^{*n}$ define a decreasing filtration
\[
  A^n(\GG;M) = A_{0}^{*n} \supset A_1^{*n} \supset \dots \supset A_n^{*n} \supset A_{n+1}^{*n} = 0.
\]
We set
\begin{align}\label{eq:def_of_Zrpq}
  Z_r^{p,q} = \{ f \in A_{p}^{*p+q} \mid \delta f \in A_{p+r}^{*p+q+1} \}.
\end{align}
Then, $E_r^{p,q}$ %of the Hochschild-Serre spectral sequence
is defined by
\begin{align}\label{eq:def_of_Erpq}
  E_r^{p, q} = Z_r^{p,q} / (Z_{r-1}^{p+1, q-1} + \delta A_{p-r+1}^{*p+q-1}).%%%%Hattori, Topology lemma10.13
\end{align}
The derivation map
\[
  d_r^{p,q} : E_r^{p,q} \to E_r^{p+r, q-r+1}
\]
is induced from the coboundary map
\[
  \delta : A^{p+q}(\GG;M) \to A^{p+q+1}(\GG;M).
\]
Let
\begin{align}\label{can_isom_E_2}
  \phi \colon Z_{2}^{p.q} \to C_{\grp}^p(G;H^q(K;M)),
\end{align}
be a map defined by
\begin{align*}
  \phi(f)(g_1, \cdots, g_p) = [f(\ast, \cdots, \ast, g_1, \cdots, g_p)] \in H^q(K;M),
\end{align*}
where we regard $f(\ast, \cdots, \ast, g_1, \cdots, g_p)$ as a group $q$-cocycle on $K$.
Then, this map $\phi$ induces the isomorphism $E_2^{p,q} \cong H^p(G;H^q(K;M))$ in Theorem \ref{thm:HS_s.s}.
The transgression map is described as follows.

For $r \geq 1$, the derivation map $d_r^{0,r-1} \colon E_r^{0, r-1} \to E_r^{r,0}$ is called the \textit{transgression map}.
\begin{lemma}\label{lem:transgression_description}
  Let $f \in A^r(\GG;M)$ be a normalized cochain.
  Assume that there exists a cocycle $c \in C_{\grp}^{r+1}(G;M)$ such that the pullback $\pi^* c$ is equal to $\delta f$.
  Then, the cochain $f$ defines an element $[f]_r$ of $E_r^{0, r-1}$ and the cohomology class $d_r^{0,r-1}([f]_r)$ is equal to $[\pi^* c]_r \in E_r^{r,0}$, where the bracket $[\, \cdot \, ]_r$ denotes the equivalence class in the quotient (\ref{eq:def_of_Erpq}).
  Moreover, if $E_r^{0,r-1} \cong E_2^{0,r-1} \cong H_{\grp}^{r-1}(K;M)$ and $E_r^{r,0} \cong E_2^{r,0} \cong H_{\grp}^r(G;M)$, then the cohomology classes of $H_{\grp}^{r-1}(K;M)$ and $H_{\grp}^r(G;M)$ corresponding to $[f]_r$ and $d_r^{0,r-1}([f]_r)$ are represented by $f|_{K^{r-1}}$ and $c$, respectively.
\end{lemma}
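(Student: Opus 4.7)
The strategy is to unwind the definitions (\ref{eq:def_of_Zrpq})--(\ref{eq:def_of_Erpq}) and (\ref{can_isom_E_2}) of the Hochschild--Serre spectral sequence. I would proceed in three short steps.

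First, I would verify that $f$ actually represents a class in $E_r^{0,r-1}$, i.e.\ that $f \in Z_r^{0,r-1}$. The inclusion $f \in A_0^{*r-1} = A^{r-1}(\GG;M)$ is automatic. For the second condition in (\ref{eq:def_of_Zrpq}), I would observe that any cochain of the form $\pi^* c$ depends on each of its arguments only through the projection $\pi$, and therefore lies in the deepest piece $A_r^{*r}$ of the filtration. The hypothesis $\delta f = \pi^* c$ consequently places $\delta f \in A_r^{*r}$, and $[f]_r \in E_r^{0,r-1}$ is well defined.

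Second, I would compute $d_r^{0,r-1}([f]_r)$ directly from its construction. The differential $d_r^{0,r-1}$ is induced on the subquotient (\ref{eq:def_of_Erpq}) by the ordinary coboundary $\delta \colon A^{r-1}(\GG;M) \to A^r(\GG;M)$. Applying $\delta$ to the representative $f$ yields $\pi^* c \in Z_r^{r,0}$, and hence $d_r^{0,r-1}([f]_r) = [\pi^* c]_r$ in $E_r^{r,0}$.

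Third, I would translate the two classes through the isomorphism $\phi$ of (\ref{can_isom_E_2}). For $p=0$, $q=r-1$ the prescription has no $G$-arguments and reads $\phi(f) = [f(\ast,\dots,\ast)]$, viewed as a cocycle on $K$; this is by definition $f|_{K^{r-1}}$, so the image of $[f]_r$ is $[f|_{K^{r-1}}] \in H_{\grp}^{r-1}(K;M)$. For $p=r$, $q=0$ the formula reads $\phi(\pi^*c)(g_1,\dots,g_r) = [(\pi^*c)(\tilde g_1,\dots,\tilde g_r)]$ for any lifts $\tilde g_i \in \pi^{-1}(g_i)$; since $\pi^*c$ factors through $\pi$, this equals $c(g_1,\dots,g_r)$ independently of the lifts, so the image of $[\pi^*c]_r$ is the class represented by $c$ in $H_{\grp}^r(G;M)$.

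No single step is genuinely difficult: the whole lemma is bookkeeping against the filtration (\ref{eq:def_of_Zrpq}). The subtle point, and the place where I would be most careful, is the first step---recognising that $\pi^* c$ sits in the \emph{maximally} filtered piece $A_r^{*r}$ (rather than merely in $A_1^{*r}$), which is precisely what makes the transgression land on the bottom edge $E_r^{r,0}$, together with the parallel indexing checks (e.g.\ $A_{-r+1}^{*r-2} = A^{r-2}(\GG;M)$) needed to parse the denominators in (\ref{eq:def_of_Erpq}) correctly.
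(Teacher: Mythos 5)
Your proof is correct and is precisely the definitional unwinding against the filtration (\ref{eq:def_of_Zrpq})--(\ref{eq:def_of_Erpq}) and the map $\phi$ of (\ref{can_isom_E_2}) that the paper declares ``straightforward'' and omits. Note only that you have (rightly) read the hypotheses with $f \in A^{r-1}(\GG;M)$ and $c \in C_{\grp}^{r}(G;M)$ --- the degree convention that the conclusion $[f]_r \in E_r^{0,r-1}$ and the later applications (e.g.\ Lemma \ref{lemma:commutative_diagram} with $r=2$) actually require --- rather than the $f \in A^{r}$, $c \in C_{\grp}^{r+1}$ appearing in the statement, which is off by one.
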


Since the proof is straightforward, we omit it.

\begin{lemma}\label{lemma:commutative_diagram}
  Let $0 \to S^1 \to \Gamma \xrightarrow{\pi} G \to 1$ be a central $S^1$-extension.
  Then, the diagram
  \[
  \xymatrix{
  H_{\grp}^1(S^1;S^1) \ar[r]^-{d_2^{0,1}} \ar[d]^{\delta} & H_{\grp}^2(G;S^1) \ar[d]^{\delta} \\
  H_{\grp}^2(S^1;\mathbb{Z}) \ar[r]^-{-d_3^{0,2}} & H_{\grp}^3(G;\mathbb{Z})
  }
  \]
  commutes, where each $\delta$ is the connecting homomorphism and $d_2^{0,1}$ and $d_3^{0,2}$ are the transgression maps of the Hochschild-Serre spectral sequence of the central $S^1$-extension.
\end{lemma}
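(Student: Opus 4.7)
The plan is to prove commutativity by an explicit cocycle computation: I track a representative $\alpha\colon S^1\to S^1$ of a class in $H^1_{\grp}(S^1;S^1)$ around both paths and verify that the two resulting $3$-cocycles on $G$ are identically equal (both being $-\delta(\tilde\alpha\circ c)$ for suitable choices of lifts on $\Gamma$). Throughout, transgressions are identified via Lemma~\ref{lem:transgression_description}.

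First I fix a normalized set-theoretic section $s\colon G\to\Gamma$ of $\pi$, so that every $\gamma\in\Gamma$ is written uniquely as $\gamma=z\cdot s(g)$ with $z\in S^1$ and $g=\pi(\gamma)$, and let $c(g_1,g_2)=s(g_1)s(g_2)s(g_1g_2)^{-1}$ be the corresponding Euler cocycle. For the upper-right composition, I extend $\alpha$ to the normalized $1$-cochain $f(\gamma):=\alpha(z)$ on $\Gamma$; using that $\alpha$ is a homomorphism and that $S^1$ lies in the center of $\Gamma$, a short calculation gives $\delta f=-\pi^{\ast}(\alpha\circ c)$, whence Lemma~\ref{lem:transgression_description} yields $d_2^{0,1}(\alpha)=[-\alpha\circ c]$. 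Fixing any set-theoretic lift $\tilde\alpha\colon S^1\to\mathbb{R}$ with $\tilde\alpha(0)=0$, the connecting homomorphism $\delta$ lifts this representative to $-\tilde\alpha\circ c\in C^2_{\grp}(G;\mathbb{R})$ and takes its coboundary, so the upper-right path sends $\alpha$ to $[-\delta(\tilde\alpha\circ c)]\in H^3_{\grp}(G;\mathbb{Z})$.

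For the lower-left composition, $\delta(\alpha)\in H^2_{\grp}(S^1;\mathbb{Z})$ is represented by $\delta\tilde\alpha$, and the key step is to construct a cochain on $\Gamma$ to which Lemma~\ref{lem:transgression_description} can be applied at level $r=3$. Setting $\tilde f(\gamma):=\tilde\alpha(z)$, I propose
\[
  F:=\delta\tilde f+\pi^{\ast}(\tilde\alpha\circ c)\in C^2_{\grp}(\Gamma;\mathbb{R}).
\]
A direct check establishes three points: (i) the $\mathbb{R}$-error terms cancel because $\alpha$ is a homomorphism, so $F$ is in fact $\mathbb{Z}$-valued; (ii) since $s$ is normalized, $c(1,g)=c(g,1)=1$, so $F|_{S^1\times S^1}=\delta\tilde\alpha$ and $F$ represents $\delta(\alpha)$ under the identification $E_2^{0,2}\cong H^2_{\grp}(S^1;\mathbb{Z})$; and (iii) since $\delta\circ\delta=0$ and $\delta$ commutes with $\pi^{\ast}$, one has $\delta F=\pi^{\ast}\delta(\tilde\alpha\circ c)$, which lies in $A_3^{\ast 3}$. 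Lemma~\ref{lem:transgression_description} then gives $d_3^{0,2}(\delta\alpha)=[\delta(\tilde\alpha\circ c)]$, so the lower-left path also sends $\alpha$ to $[-\delta(\tilde\alpha\circ c)]$, which matches the upper-right value.

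The main obstacle I anticipate is pinning down the witness cochain $F$: naive candidates such as pulling $\delta\tilde\alpha$ back along $\gamma\mapsto z$, or using $\delta\tilde f$ directly, are respectively not $\mathbb{Z}$-valued or do not have coboundary in the pullback filtration. The correct ansatz $F=\delta\tilde f+\pi^{\ast}(\tilde\alpha\circ c)$ is forced by the requirement to simultaneously remember the lift $\tilde f$ of the $1$-cochain used for $d_2^{0,1}$ and the lift $\tilde\alpha\circ c$ of its transgressed cocycle, and the minus sign in front of $d_3^{0,2}$ in the statement is precisely the discrepancy between $\delta$ of these two lifts.
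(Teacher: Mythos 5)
Your proposal is correct and follows essentially the same route as the paper's proof: the paper also extends $\alpha$ to the normalized cochain $\gamma\mapsto\alpha(\gamma\cdot s(\pi(\gamma))^{-1})$ on $\Gamma$, identifies both transgressions via Lemma~\ref{lem:transgression_description}, and exhibits the same witness $2$-cochain (written there as $\delta(l\circ\varphi_s)-\pi^*(l\circ c)$, which agrees with your $F$ up to integer-valued corrections coming from the choice of lifts and the sign convention for the transgressed cocycle). The only cosmetic difference is your use of an arbitrary normalized lift $\tilde\alpha$ in place of the fixed section $l\colon S^1\to[0,1)$.
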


\begin{proof}
  For an element $\varphi \in H_{\grp}^1(S^1;S^1)$ and for a section $s \colon G \to \Gamma$ satisfying $s(1_{G}) = 1_{\Gamma}$, we define a normalized cochain $\varphi_s \colon \Gamma \to S^1$ by setting
  \[
    \varphi_s(\gamma) = \varphi(\gamma \cdot (s\pi (\gamma))^{-1}).
  \]
  Then, the restriction $\varphi_s|_{S^1}$ is equal to $\varphi$, and the cochain $\varphi_s$ is contained in $Z_2^{0,1}$.
  Hence there exists a normalized cocycle $c \in A^2(G;S^1)$ satisfying
  \[
    \pi^* c = \delta \varphi_s,
  \]
  and the cohomology class $[c] \in H_{\grp}^2(G;S^1)$ is equal to $d_2^{0,1}(\varphi)$.
  By the definition of the connecting homomorphism $\delta \colon H_{\grp}^2(G;S^1) \to H_{\grp}^3(G;\ZZ)$, we have $\delta d_2^{0,1} (\varphi) = \delta [c] = [\delta (l \circ c)] \in H_{\grp}^3(G;\mathbb{Z})$,
  where $l\colon S^1 \to \RR$ is the section defined in the proof of Lemma \ref{lemma:id_connecting_hom}.

  By the definition of the connecting homomorphism $\delta \colon H_{\grp}^1(S^1;S^1) \to H_{\grp}^2(S^1;\ZZ)$, the class $\delta (\varphi)$ is equal to $[\delta (l \circ \varphi)] \in H_{\grp}^2(S^1;\ZZ)$.
  Let $c' \in A^2(\Gamma, \mathbb{Z})$ be a normalized cochain defined by
  \[
    c' = \delta (l \circ \varphi_s) - \pi^*(l \circ c).
  \]
  Then, the restriction $c'|_{S^1\times S^1}$ is equal to $\delta (l \circ \varphi)$, and the coboundary $\delta c'$ is equal to $-\delta(\pi^*(l \circ c) = -\pi^*(\delta (l \circ c))$.
  Hence, by Lemma \ref{lem:transgression_description}, we have
  \[
    d_3^{0,2} (\delta(\varphi)) = - [\delta(l \circ c)] \in H_{\grp}^3(G; \mathbb{Z}),
  \]
  and the lemma follows.
\end{proof}

In terms of the Hochschild-Serre spectral sequence, the cohomology
class $e(\Gamma)$ is described as follows.

\begin{lemma}\label{lemma:identity-Euler_class}
  Let $1 \to M \to \Gamma \xrightarrow{\pi} G \to 1$ be a central $M$-extension of $G$ and $E_r^{p,q}$ the Hochschild-Serre spectral sequence of the central extension.
  Let
  \[
    d_2^{0,1} \colon H_{\grp}^1(M;M) = E_2^{0,1} \to E_2^{2,0} = H_{\grp}^2(G;M)
  \]
  be the derivation of the spectral sequence.
  Then, the cohomology class $e(\Gamma) \in H_{\grp}^2(G;M)$ is equal to $-d_2^{0,1}(\id_M)$.
\end{lemma}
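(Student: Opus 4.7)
The plan is to apply Lemma \ref{lem:transgression_description} with $r = 2$. Concretely, I will construct a normalized $1$-cochain $f \in A^1(\Gamma;M)$ whose restriction to $M$ equals $\id_M$, whose coboundary $\delta f$ lies in $A_2^{*2}$ (so that $f \in Z_2^{0,1}$), and then identify the cocycle on $G$ whose $\pi$-pullback is $\delta f$.

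Fix a normalized set-theoretic section $s \colon G \to \Gamma$ of $\pi$ with $s(1_G) = 1_\Gamma$; this is the same section that defines the representative $c(g,h) = s(g)s(h)s(gh)^{-1}$ of $e(\Gamma)$ in (\ref{euler_cocycle}). For every $\gamma \in \Gamma$, the element $\gamma \cdot s(\pi\gamma)^{-1}$ lies in the kernel $i(M)$, so I define $f \colon \Gamma \to M$ by the relation $i(f(\gamma)) = \gamma \cdot s(\pi\gamma)^{-1}$. This $f$ is normalized, and for $\gamma = i(m)$ one has $\pi\gamma = 1_G$, hence $f(i(m)) = m$; in other words $f|_M = \id_M$ under the identification $i(M) \cong M$.

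The key computation is to show $\delta f = -\pi^* c$. One writes $\gamma_j = i(f(\gamma_j)) \cdot s(\pi\gamma_j)$, uses the centrality of $i(M)$ in $\Gamma$ to slide $i(f(\gamma_2))$ past $s(\pi\gamma_1)$, and replaces $s(\pi\gamma_1)s(\pi\gamma_2)$ by $c(\pi\gamma_1,\pi\gamma_2) \cdot s(\pi\gamma_1 \pi\gamma_2)$. Comparing the resulting expression for $\gamma_1\gamma_2$ with the other decomposition $\gamma_1\gamma_2 = i(f(\gamma_1\gamma_2)) \cdot s(\pi\gamma_1 \pi\gamma_2)$ yields $f(\gamma_1\gamma_2) = f(\gamma_1) + f(\gamma_2) + c(\pi\gamma_1, \pi\gamma_2)$, and therefore $\delta f(\gamma_1,\gamma_2) = -c(\pi\gamma_1, \pi\gamma_2)$. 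In particular $\delta f$ depends only on $\pi\gamma_1$ and $\pi\gamma_2$, confirming $f \in Z_2^{0,1}$, and Lemma \ref{lem:transgression_description} applied with the cocycle $-c$ on $G$ gives $d_2^{0,1}(\id_M) = [-c] = -e(\Gamma)$, which is the claim.

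I do not anticipate any genuine obstacle here: the argument is a direct unwinding of the definitions of $e(\Gamma)$, of the cochain $f$, and of the transgression. The only point requiring attention is consistent bookkeeping between the multiplicative group law on $\Gamma$ and the additive abelian structure on $M$, together with invoking centrality of $i(M)$ at precisely the step where the $c$-factor needs to separate out cleanly from the $f$-factors.
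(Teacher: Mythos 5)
Your proposal is correct and coincides with the paper's own argument: both define the normalized cochain $f(\gamma) = \gamma\cdot(s\pi(\gamma))^{-1}$ from a normalized section $s$, use centrality of $i(M)$ to compute $\delta f = -\pi^*c$ with $c$ the cocycle of (\ref{euler_cocycle}), and then invoke Lemma \ref{lem:transgression_description} to conclude $d_2^{0,1}(\id_M) = [-c] = -e(\Gamma)$. No issues.
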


\begin{proof}
  Let $s \colon G \to \Gamma$ be a section satisfying $s(1_G) = 1_{\GG}$, where $1_G$ and $1_{\Gamma}$ are the unit elements of $G$ and $\Gamma$, respectively.
  We define a cochain $f \in C_{\grp}^1(\GG;M)$ by setting $f(\gamma) = \gamma \cdot(s\pi(\gamma))^{-1}$.
  Let $c \in C_{\grp}^2(G;M)$ be a cocycle defined by (\ref{euler_cocycle}).
  Note that the cocycles $f$ and $c$ are normalized since $s(1_G) = 1_{\GG}$.
  Since $M$ is in the center of $\GG$, we have
  \begin{align}\label{eq1}
    \delta f(\gg_1, \gg_2) &= f(\gg_2) - f(\gg_1 \gg_2) + f(\gg_1)\\
    &= -(\gg_1 \gg_2 \cdot (s\pi(\gg_1 \gg_2))^{-1} + s\pi(\gg_1) \cdot \gg_1^{-1} + s\pi(\gg_2) \cdot \gg_2^{-1}) \nonumber \\
    &= -s(\pi(\gg_1))s(\pi(\gg_2)) \gg_2^{-1} \gg_1^{-1} \cdot \gg_1 \gg_2 s(\pi(\gg_1)\pi(\gg_2))^{-1} \nonumber \\
    &= -s(\pi(\gg_1))s(\pi(\gg_2))s(\pi(\gg_1)\pi(\gg_2))^{-1} = -\pi^*c (\gg_1, \gg_2) \nonumber
  \end{align}
  for any $\gg_1, \gg_2 \in \GG$, where $c$ is the cocycle defined in (\ref{euler_cocycle}).
  Since $f|_{M} = \id_M$, we have $d_2^{0,1}(\id_M) = [-c] = -e(\GG)$ by Lemma \ref{lem:transgression_description}.
\end{proof}

%Since the obstruction is the class corresponding to the identity map $\Hom$
The following lemma is well known.
\begin{lemma}\label{lemma:obstruction-euler_class}
  Let $1 \to A \to \Gamma \to G \to 1$ be a central $A$-extension of $G$ and
  \begin{align}\label{fib_classifying_space_d}
    BA^{\delta} \to B\Gamma^{\delta} \to BG^{\delta}
  \end{align}
  the corresponding fibration of classifying spaces of discrete groups.
  Then, the primary obstruction class $\mathfrak{o}(\id, B\Gamma^{\delta}) \in H^2(BG^{\delta};A)$ coincides with the class $e(\Gamma) \in H_{\grp}^2(G;A)$ under the canonical isomorphism $H^2(BG^{\delta};A) \cong H_{\grp}^2(G;A)$.
\end{lemma}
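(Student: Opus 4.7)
The plan is to identify the primary obstruction class directly with the cocycle (\ref{euler_cocycle}) by working in a simplicial model of the fibration (\ref{fib_classifying_space_d}). Since $BA^{\delta} \simeq K(A,1)$ and the extension is central, (\ref{fib_classifying_space_d}) is a principal $K(A,1)$-fibration, so its primary obstruction is the only obstruction and lives in $H^2(BG^{\delta}; \pi_1(BA^{\delta})) = H^2(BG^{\delta};A)$.

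First I would use the bar construction as the model: $p$-simplices of $BG^{\delta}$ are tuples $(g_1, \dots, g_p) \in G^p$, and $B\pi \colon B\GG^{\delta} \to BG^{\delta}$ is induced by the simplicial map $(\gg_1, \dots, \gg_p) \mapsto (\pi(\gg_1), \dots, \pi(\gg_p))$. I then fix a normalized set-theoretic section $s \colon G \to \GG$ of $\pi$ with $s(1_G) = 1_{\GG}$. A partial lift of $\id_{BG^{\delta}}$ over the 1-skeleton is given by sending the unique vertex to the unique vertex of $B\GG^{\delta}$ and each 1-simplex $(g)$ to the 1-simplex $(s(g))$; the question is whether this extends over each 2-simplex.

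Next I would compute the obstruction over a 2-simplex $(g,h)$. The prescribed boundary lift consists of the three edges $(s(g))$, $(s(h))$, $(s(gh))$, whereas the 2-simplex $(s(g), s(h))$ of $B\GG^{\delta}$ has faces $(s(g))$, $(s(h))$, $(s(g)s(h))$. Consequently the obstruction over $(g,h)$ is the loop in the fiber $BA^{\delta}$ representing $s(g)s(h)s(gh)^{-1} \in A$. Comparing with (\ref{euler_cocycle}), the obstruction cochain equals $c$ as a normalized group 2-cochain, and its cohomology class in $H^2(BG^{\delta};A) \cong H_{\grp}^2(G;A)$ is precisely $e(\GG)$.

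The only real subtlety is sign bookkeeping: the comparison between the geometric boundary of a bar-construction simplex and the algebraic triple product $s(g)s(h)s(gh)^{-1}$ could in principle introduce a sign, so the bulk of a careful write-up is verifying that the standard conventions underlying the canonical isomorphism $H^2(BG^{\delta};A) \cong H_{\grp}^2(G;A)$, the definition of the primary obstruction class, and the cocycle (\ref{euler_cocycle}) all agree without a sign. As a sanity check I would match this against Lemma \ref{lemma:identity-Euler_class} by passing to the Serre spectral sequence of (\ref{fib_classifying_space_d}), in which the primary obstruction corresponds, up to a sign depending on the transgression convention, to $d_2^{0,1}(\id_A)$.
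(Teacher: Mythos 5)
Your proposal is correct and follows essentially the same route as the paper's own (omitted/commented) proof: model $BG^{\delta}$ by the nerve/bar construction, lift the boundary of each $2$-cell $(g,h)$ via a set-theoretic section $s$, and read off the obstruction over $(g,h)$ as the fiber loop representing $s(g)s(h)s(gh)^{-1}$, i.e.\ the cocycle (\ref{euler_cocycle}) of $e(\Gamma)$. The sign bookkeeping you flag is exactly the point the paper also glosses over, so nothing is missing.
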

\begin{comment}
\begin{proof}
  Let $s\colon G \to \Gamma$ be a section.
  Take the model of classifying space $BG^{\delta}$ as the fat realization $\| NG^{\delta} \|$.
  For each $2$-cell $(g, h) \colon \Delta^2 \to BG^{\delta}$, we obtain the lift $(g, h)|_{\partial \Delta} \colon \partial \Delta \to B\Gamma^{\delta}$ by using the section $s \colon G \to \Gamma$.
  From the contractibility of the standard simplex $\Delta^2$, we obtain the homotopy class $h(g, h) \in \pi_1(BA^{\delta}) = A$ which corresponds to $c(g, h) = s(g)s(h)s(gh)^{-1} \in A$.
  Since $h \in C_{\cell}^2(BG^{\delta};A)$ is the obstruction cocycle and $c$ is the cocycle of $e(\Gamma)$, we have $\mathfrak{o}(\id, B\Gamma^{\delta}) = [h] = [c] = e(\Gamma)$.
\end{proof}
\end{comment}

\section{Cocycles}\label{sec:cocycles}

\subsection{The group two-cocycle}\label{subsec:2cocycle}

%In this subsection, we define a group two-cocycle on $\Symp(X,\omega)$ with coefficients in the circle $S^1$, which will be used in the proof of Theorem \ref{main_theorem}.
%In this paper, we regard the circle $S^1$ as the quotient $\mathbb{R}/\mathbb{Z}$.
%Note that the cocycle defined in this section is the $S^1$-coefficients analogue of Ismagilov, Losik, and Michor's cocycle \cite{ismagilov_losik_michor06}.

Let $(X, \omega)$ be a one-connected and integral symplectic manifold.
Let $(\Omega^*(M),d)$ denote the de Rham complex and $(C^*(X;\mathbb{R}),\delta)$ the ($C^{\infty}$-)singular cochain complex with coefficients in $\mathbb{R}$.
Then, the canonical cochain map
\begin{align}\label{map_I}
  I\colon \Omega^n(M) \to C^n(X;\mathbb{R});\  \eta \to I_{\eta}
\end{align}
is defined by $I_{\eta}(\sigma) = \int_{\sigma} \eta$, where $\sigma$ is a $C^{\infty}$-singular $n$-simplex, and de Rham's theorem asserts that the map $I$ induces the isomorphism of cohomology.
The following property is standard.
\begin{lemma}\label{lemma:I_pullback_compati}
  The cochain map $I$ is compatible with pullback.
  In particular, all elements in $\Symp(X,\omega)$ preserve the singular two-cocycle $I_{\omega}$.
\end{lemma}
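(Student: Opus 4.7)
The plan is to verify the compatibility of $I$ with pullback directly from the definitions and then deduce the ``in particular'' clause as an immediate corollary. Concretely, for a smooth map $f\colon X \to Y$, an $n$-form $\eta \in \Omega^n(Y)$, and a smooth singular simplex $\sigma \colon \Delta^n \to X$, I would unwind both sides of the claimed identity $f^* I_\eta = I_{f^*\eta}$: the left-hand side is $(f^* I_\eta)(\sigma) = I_\eta(f\circ \sigma) = \int_{f\circ \sigma} \eta$, while the right-hand side is $I_{f^*\eta}(\sigma) = \int_\sigma f^*\eta$.

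The core step is then the naturality of integration of differential forms under pullback by smooth maps, namely
\[
  \int_{f \circ \sigma} \eta = \int_{\Delta^n} (f\circ \sigma)^* \eta = \int_{\Delta^n} \sigma^*(f^*\eta) = \int_\sigma f^*\eta.
\]
This is the standard change-of-variables formula applied to the smooth map $\sigma \colon \Delta^n \to X$ composed with $f$, so there is no real obstacle; one just needs the smoothness hypothesis on singular simplices built into the $C^\infty$-singular complex to legitimately pull back along $f$.

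For the second assertion, I would simply apply the first to $f = g \in \Symp(X,\omega)$ and $\eta = \omega$: since $g^*\omega = \omega$ by definition of the symplectomorphism group, compatibility with pullback yields $g^* I_\omega = I_{g^*\omega} = I_\omega$, so every symplectomorphism preserves the singular cocycle $I_\omega$. The only ``subtle'' point worth flagging is that the equality $g^* I_\omega = I_\omega$ is an equality of singular cochains, not merely of cohomology classes, which is what will be needed later when $I_\omega$ is manipulated at the cochain level in the definitions of $a_{x_0,\alpha}$ and $c_{x_0,\sigma,w}$.
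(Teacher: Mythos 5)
Your proof is correct and is exactly the standard verification; the paper itself offers no argument for this lemma, simply declaring the property standard. Your unwinding of $f^*I_\eta = I_{f^*\eta}$ via $\int_{\Delta^n}(f\circ\sigma)^*\eta = \int_{\Delta^n}\sigma^*(f^*\eta)$, and your remark that $g^*I_\omega = I_\omega$ holds at the cochain level (which is what the later constructions actually use), supply precisely what the paper leaves implicit.
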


Let us consider the cohomology long exact sequence
\[
  \cdots \longrightarrow H^2(X;\mathbb{Z}) \longrightarrow H^2(X;\mathbb{R}) \overset{j_*}{\longrightarrow} H^2(X;S^1) \longrightarrow H^3(X;\ZZ) \longrightarrow \cdots
\]
induced from $0 \to \ZZ \to \RR \xrightarrow{j} S^1 \to 0$.
Since the symplectic form $\omega$ is integral, the cohomology class $j_*[I_{\omega}] = [jI_{\omega}]$ is equal to zero.
We take a singular one-cochain $\alpha \in C^1(X;S^1)$ such that $\delta \alpha = jI_{\omega}$.
By Lemma \ref{lemma:I_pullback_compati}, the cochain $g^*\alpha - \alpha$ is a cocycle for any $g \in \Symp(X,\omega)$.
%Since $X$ is one-connected, the one-cocycle $g^*\alpha - \alpha \in C^1(X;S^1)$ is a coboundary.
%Hence, there exists a cochain $k(g) \in C^0(X;S^1)$ satisfies $\delta (k(g)) = g^*\alpha - \alpha$.
For $g, h \in \Symp(X,\omega)$, we set
\[
  a_{x_0, \alpha}(g,h) = \int_{x_0}^{h(x_0)} g^*\alpha - \alpha %= k(g)(h(x_0)) - k(g)(x_0),
\]
where $x_0$ is a point in $X$.
Here the symbol $\int_{x_0}^{h(x_0)}g^*\alpha - \alpha$ denotes the pairing of the cocycle $g^*\alpha - \alpha$ and a path from $x_0$ to $h(x_0)$.

\begin{proposition}
  The cochain $a_{x_0, \alpha} \in C_{\grp}^2(\Symp(X,\omega);S^1)$ is a cocycle.
  Moreover, the cohomology class $[a_{x_0, \alpha}]$ does not depend on the choice of $x_0$ and $\alpha$.
\end{proposition}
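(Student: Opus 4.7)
The plan is to verify in sequence: first that $a_{x_0,\alpha}(g,h)$ is well defined (independent of the chosen path from $x_0$ to $h(x_0)$); next that it satisfies the $2$-cocycle identity $\delta a_{x_0,\alpha}=0$; and finally that changing $x_0$ or $\alpha$ alters the cocycle only by a group coboundary. The key topological input in every step is that $X$ is simply connected, which yields $H_1(X;\ZZ)=0$ and $H^1(X;S^1)=0$.

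For well-definedness, I would first observe that $g^*\alpha-\alpha$ is a closed singular $1$-cochain: its coboundary is $g^*(jI_\omega)-jI_\omega=jI_{g^*\omega-\omega}$, which vanishes because $g\in\Symp(X,\omega)$ and by Lemma~\ref{lemma:I_pullback_compati} pullback commutes with $I$. Since any two paths in $X$ from $x_0$ to $h(x_0)$ differ by a loop, which is null-homologous by $H_1(X;\ZZ)=0$, the pairing of $g^*\alpha-\alpha$ with such a path depends only on the endpoints. The cocycle identity I would then obtain by expanding
\[
  \delta a_{x_0,\alpha}(f,g,h) = a_{x_0,\alpha}(g,h) - a_{x_0,\alpha}(fg,h) + a_{x_0,\alpha}(f,gh) - a_{x_0,\alpha}(f,g),
\]
using the translation rule $\int_{x_0}^{h(x_0)} g^*\eta = \int_{g(x_0)}^{gh(x_0)} \eta$ for any closed $1$-cochain $\eta$ on the middle term, together with path concatenation $\int_{x_0}^{gh(x_0)} = \int_{x_0}^{g(x_0)} + \int_{g(x_0)}^{gh(x_0)}$ (again valid on closed cochains). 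The four resulting integrals of $f^*\alpha-\alpha$ over pieces of the triangle with vertices $x_0, g(x_0), gh(x_0)$ cancel.

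For independence of $x_0$, I would fix a path from $x_0$ to $x_0'$ and define the $1$-cochain
\[
  b(g) = \int_{x_0}^{x_0'}(g^*\alpha-\alpha)\in C^{1}_{\grp}(\Symp(X,\omega);S^1).
\]
The difference $a_{x_0',\alpha}-a_{x_0,\alpha}$ is a signed sum of integrals of the closed cochain $g^*\alpha-\alpha$ along the four sides of the quadrilateral with vertices $x_0, h(x_0), h(x_0'), x_0'$; this boundary is null-homologous since $\pi_1(X)=0$, and rewriting the side from $h(x_0)$ to $h(x_0')$ via the substitution rule identifies the result with $\pm \delta b$. For independence of $\alpha$, if $\alpha'$ is another primitive of $jI_\omega$, then $\alpha'-\alpha$ is a $1$-cocycle in $C^1(X;S^1)$, and $H^1(X;S^1)=0$ gives $\alpha'-\alpha=\delta\beta$ for some $\beta\in C^0(X;S^1)$. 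Substituting, the difference $a_{x_0,\alpha'}(g,h)-a_{x_0,\alpha}(g,h)$ becomes the pairing of $\delta(g^*\beta-\beta)$ with a path from $x_0$ to $h(x_0)$, which telescopes to $\beta(gh(x_0))-\beta(h(x_0))-\beta(g(x_0))+\beta(x_0)=-\delta c(g,h)$ for $c(g)=\beta(g(x_0))-\beta(x_0)$.

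I expect the only genuine obstacle to be bookkeeping: keeping straight the sign convention of the group coboundary used in the paper, the orientations of paths, and the distinction between $g^*$ acting on cochains and on points. No further topological or analytic input is needed beyond simple-connectedness of $X$ and the fact that $g$ preserves both $\omega$ and, via Lemma~\ref{lemma:I_pullback_compati}, the singular cocycle $I_\omega$.
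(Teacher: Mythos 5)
Your proof is correct and is exactly the routine verification the paper has in mind (the paper omits it, deferring to the analogous \cite[Theorem 3.1]{ismagilov_losik_michor06}): closedness of $g^*\alpha-\alpha$ plus $H_1(X;\ZZ)=0$ gives well-definedness and the cocycle identity via the translation/concatenation rules, and the two coboundaries $b(g)=\int_{x_0}^{x_0'}(g^*\alpha-\alpha)$ and $c(g)=\beta(g(x_0))-\beta(x_0)$ (with $\alpha'-\alpha=\delta\beta$ from $H^1(X;S^1)=0$) handle the independence claims. All steps, including the final sign computation $-\delta c$, check out.
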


The proof is straightforward, cf. \cite[Theorem 3.1]{ismagilov_losik_michor06}.

\subsection{The group three-cocycle}\label{subsection:three_cocycle}
Let $(X, \omega)$ be a one-connected and integral symplectic manifold.
By the definition of the integral symplectic manifold, there exists a cohomology class $[\omega]_{\ZZ} \in H^2(X;\mathbb{Z})$ which corresponds to the class $[I_{\omega}] \in H^2(X;\RR)$ under the change of coefficients homomorphism.
Here $I$ is the map (\ref{map_I}).
\begin{remark}\label{rem:inj_of_ZtoR}
  The change of coefficients homomorphism $H^2(X;\ZZ) \to H^2(X;\RR)$ is injective.
  Indeed, the map is a part of the cohomology long exact sequence
  \[
    \cdots \to H^1(X;S^1) \to H^2(X;\ZZ) \to H^2(X;\RR) \to H^2(X;S^1) \to \cdots,
  \]
  where we identify the circle $S^1$ with the quotient $\RR/\ZZ$.
  Since $X$ is one-connected, the cohomology group $H^1(X;S^1)$ is trivial.
  Hence, the injectivity follows.
  In particular, the class $[\omega]_{\ZZ}$ is uniquely determined by the one-connectedness of $X$.
\end{remark}

%In this subsection, we give the precise definition of the group three-coycle $c_{x_0, \sigma, w}$ on the symplectic diffeomorphism group $\Symp(X,\omega)$.
For a base point $x_0 \in X$, let $\mathcal{P}_{x_0} = \{ \gamma \colon [0,1] \to X \mid \gamma(0) = x_0 \}$ denote the based path space of $X$.
Let $\sigma \colon X \to \mathcal{P}_{x_0}$ be a section of the projection $\mathcal{P}_{x_0} \to X ; \, \gamma \to \gamma(1)$.
Since $X$ is one-connectedness, there exists a disk $\Delta_{x_0, \sigma}(g,h)$ whose boundary is the one-chain $\sigma(g(x_0)) - \sigma(gh(x_0)) + g\sigma(h(x_0))$ for any $g,h \in \Symp(X,\omega)$.
Let $w \in C^2(X;\mathbb{Z})$ be a singular two-cocycle representing $[\omega]_{\mathbb{Z}} \in H^2(X;\mathbb{Z})$.
Then, we define a group cochain $c_{x_0, \sigma, w} \in C_{\grp}^3(\Symp(X,\omega);\mathbb{Z})$ by setting
\begin{align}\label{explicit_three_cocycle}
  c_{x_0, \sigma, w} (f,g,h) = \int_{\Delta_{x_0, \sigma}(g,h)} f^*w - w
\end{align}
for any $f,g,h \in \Symp(X,\omega)$.
Since the singular two-cocycle $f^*w - w$ is a coboundary, the value $c_{x_0, \sigma, w} (f,g,h)$ dose not depend on the choice of $\Delta_{x_0, \sigma}(g,h)$.

\begin{proposition}
  The group cochain $c_{x_0, \sigma, w}$ is a cocycle.
\end{proposition}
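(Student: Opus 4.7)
The plan is to compute $\delta c_{x_0,\sigma,w}(f,g,h,k)$ directly, repackage the five terms into a single pairing of $f^*w - w$ with some $2$-chain, verify that this chain is in fact a $2$-cycle, and then use that $f^*w - w$ is an integer coboundary so that the pairing vanishes.

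First I would expand
\[
  \delta c_{x_0,\sigma,w}(f,g,h,k) = c(g,h,k) - c(fg,h,k) + c(f,gh,k) - c(f,g,hk) + c(f,g,h).
\]
The first two terms share the integration domain $\Delta_{x_0,\sigma}(h,k)$, and their integrands differ by $g^*w - (fg)^*w = -g^*(f^*w - w)$; by naturality of the pullback on singular cochains their sum collapses to $-\int_{g_*\Delta_{x_0,\sigma}(h,k)}(f^*w - w)$. The remaining three already carry $f^*w - w$ as integrand, so the whole expression takes the form
\[
  \delta c_{x_0,\sigma,w}(f,g,h,k) = \int_{C(f,g,h,k)} f^*w - w,
\]
where
\[
  C(f,g,h,k) = -g_*\Delta_{x_0,\sigma}(h,k) + \Delta_{x_0,\sigma}(gh,k) - \Delta_{x_0,\sigma}(g,hk) + \Delta_{x_0,\sigma}(g,h).
\]

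Next I would check $\partial C(f,g,h,k) = 0$. Applying the defining relation $\partial \Delta_{x_0,\sigma}(a,b) = \sigma(a(x_0)) - \sigma(ab(x_0)) + a_*\sigma(b(x_0))$ to each summand and collecting yields twelve $\sigma$-terms that pair up: the contributions $g_*\sigma(h(x_0))$, $g_*\sigma(hk(x_0))$, $gh_*\sigma(k(x_0))$ coming from $-g_*\partial\Delta(h,k)$ cancel against the corresponding pieces in $\partial\Delta(gh,k)$ and $-\partial\Delta(g,hk)$, while the pieces $\sigma(g(x_0))$, $\sigma(gh(x_0))$, $\sigma(ghk(x_0))$ appearing in the last three summands kill one another. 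Hence $C(f,g,h,k)$ is a $2$-cycle.

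Finally, because $f \in \Symp(X,\omega)$ we have $f^*\omega = \omega$, and therefore $f^*[I_w] = [I_w]$ in $H^2(X;\RR)$. The injectivity of $H^2(X;\ZZ) \to H^2(X;\RR)$ recorded in Remark \ref{rem:inj_of_ZtoR} (which uses the one-connectedness of $X$) upgrades this to $[f^*w] = [w]$ in $H^2(X;\ZZ)$, so $f^*w - w = \delta\beta$ for some $\beta \in C^1(X;\ZZ)$. The pairing with the cycle $C(f,g,h,k)$ is then $\langle \delta\beta, C\rangle = \langle \beta, \partial C\rangle = 0$, completing the argument. The main obstacle is purely combinatorial, namely verifying the boundary cancellation above; everything else is formal manipulation.
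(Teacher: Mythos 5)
Your proof is correct and is precisely the explicit calculation that the paper omits (it only remarks that the claim "is easily verified by an explicit calculation"): expanding $\delta c_{x_0,\sigma,w}$, rewriting it as the pairing of $f^*w-w$ with the $2$-cycle $C(f,g,h,k)$, and using that $f^*w-w$ is an integral coboundary (a fact the paper itself records when proving well-definedness) gives exactly the intended argument. The boundary cancellation and the use of Remark~\ref{rem:inj_of_ZtoR} are both handled correctly.
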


Since this is easily verified by a explicit calculation, we omit the proof.

\begin{proposition}\label{prop:indep_of_choices}
  The cohomology class $[c_{x_0, \sigma, w}] \in H_{\grp}^3(\Symp(X,\omega);\mathbb{Z})$ is independent of the choice of $x_0, \sigma$, and $w$.
\end{proposition}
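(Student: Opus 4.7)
The plan is to show that varying each of $w$, $\sigma$, and $x_0$ alters $c_{x_0,\sigma,w}$ only by a group coboundary, treating the three parameters in turn. The common mechanism is to invoke Stokes' theorem to convert the change into an integral over $\partial\Delta_{x_0,\sigma}(g,h) = \sigma(g(x_0)) - \sigma(gh(x_0)) + g\sigma(h(x_0))$ (or a perturbation thereof), and then recognize the result as the group coboundary of an explicit two-cochain. A recurring ingredient is the fact that $f^{*}w - w$ is an integer coboundary for every $f \in \Symp(X,\omega)$: indeed, $f^{*}I_\omega = I_\omega$ by Lemma~\ref{lemma:I_pullback_compati}, so $f^{*}[\omega]_{\mathbb{Z}} = [\omega]_{\mathbb{Z}}$ by the injectivity noted in Remark~\ref{rem:inj_of_ZtoR}, and hence $f^{*}w - w$ is exact.

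Independence in $w$ is the cleanest step. If $w' - w = \delta\beta$ with $\beta \in C^{1}(X;\mathbb{Z})$, then Stokes gives
$$c_{x_0,\sigma,w'}(f,g,h) - c_{x_0,\sigma,w}(f,g,h) = \int_{\partial\Delta_{x_0,\sigma}(g,h)}(f^{*}\beta - \beta).$$
Expanding the boundary and rewriting $\int_{g\sigma(h(x_0))}(f^{*}\beta - \beta) = \int_{\sigma(h(x_0))}((fg)^{*}\beta - g^{*}\beta)$, I would verify that the right-hand side is, up to sign, the group coboundary of the two-cochain $b_\beta(f,g) = \int_{\sigma(g(x_0))}(f^{*}\beta - \beta)$.

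For independence in $\sigma$, given another section $\sigma'$ of $\mathcal{P}_{x_0} \to X$, the one-chain $\sigma(x) - \sigma'(x)$ is a cycle for each $x$; since $\pi_{1}(X) = 1$, I can pick two-chains $D(x)$ with $\partial D(x) = \sigma(x) - \sigma'(x)$. Then $\Delta_{x_0,\sigma}(g,h) - \Delta_{x_0,\sigma'}(g,h)$ and $D(g(x_0)) - D(gh(x_0)) + gD(h(x_0))$ have the same boundary and so differ by a two-cycle, over which $f^{*}w - w$ integrates to zero by the recurring fact above. The same bookkeeping as in the previous step then realizes
$$c_{x_0,\sigma,w}(f,g,h) - c_{x_0,\sigma',w}(f,g,h) = \int_{D(g(x_0)) - D(gh(x_0)) + gD(h(x_0))}(f^{*}w - w)$$
as the group coboundary of $(f,g) \mapsto \int_{D(g(x_0))}(f^{*}w - w)$.

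The change of basepoint $x_0 \mapsto x_0'$ is the step I expect to be the main obstacle, because $x_0$ appears both in the section $\sigma$ and in the evaluations $g(x_0), gh(x_0)$, so neither of the first two arguments applies verbatim. My plan is to fix a path $\tau$ from $x_0'$ to $x_0$ and transport $\sigma$ to the section $\sigma'(y) = \tau * \sigma(y)$ of $\mathcal{P}_{x_0'} \to X$; then, for each $g \in \Symp(X,\omega)$, use $\pi_{1}(X) = 1$ to choose a two-chain $E(g)$ whose boundary connects $\sigma(g(x_0))$ to $\sigma'(g(x_0'))$, and paste such chains onto $\Delta_{x_0,\sigma}(g,h)$ to build $\Delta_{x_0',\sigma'}(g,h)$ up to a two-cycle. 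Invoking exactness of $f^{*}w - w$ one final time and the same coboundary bookkeeping, the difference $c_{x_0,\sigma,w} - c_{x_0',\sigma',w}$ comes out as the coboundary of a suitable two-cochain built from the $E(g)$'s. The delicate point is organizing the chains $E(g)$ so that the algebra lines up with the group coboundary formula; since any two choices differ by two-cycles which contribute nothing, only the combinatorics needs to be tracked.
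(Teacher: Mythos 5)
Your proposal is correct and takes essentially the same approach as the paper: in each case the change is realized as the group coboundary of an explicit two-cochain obtained by integrating $f^*w-w$ (resp.\ $f^*v-v$) over correction chains bounding the relevant one-cycles, using exactness of $f^*w-w$ and $\pi_1(X)=1$. The only difference is cosmetic --- the paper treats the basepoint and section changes in one step via a disk $S(g)$ with boundary $\gamma + \sigma'(g(x_1)) - g\gamma - \sigma(g(x_0))$, which is precisely your chain $E(g)$.
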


\begin{proof}
  Let $x_1 \in X$ be another point, $\gamma$ a path from $x_0$ to $x_1$, and $\sigma' \colon X \to \mathcal{P}_{x_1}$ a section.
  Let $\Delta_{x_0, \sigma}'(g,h)$ be a disk whose boundary is $\sigma'(g(x_0)) - \sigma'(gh(x_0)) + g\sigma'(h(x_0))$ and $S(g)$ a disk whose boundary is $\gamma + \sigma'(g(x_1)) - g\gamma - \sigma(g(x_0))$.
  We set
  \[
    b(g,h) = \int_{S(h)} g^*w - w,
  \]
  then we have
  \[
    \int_{\Delta_{x_0, \sigma}(g,h)}f^*w - w - \int_{\Delta_{x_0, \sigma}'(g,h)} f^*w - w = \delta b(f,g,h).
  \]
  This implies that the cohomology class is independent of $x_0$ and $\sigma$.

  Let $w' \in C^2(X;\mathbb{Z})$ be another cocycle of $[w]_{\mathbb{Z}}$ and $v \in C^1(X;\mathbb{Z})$ a singular one-cochain satisfying $\delta v = w' - w$.
  We set
  \[
    b'(g,h) = \int_{\sigma(h(x_0))} g^*v - v,
  \]
  then we have
  \[
    \int_{\Delta_{x_0, \sigma}(g,h)}f^*w - w - \int_{\Delta_{x_0, \sigma}(g,h)}f^*w' - w' = \delta b'(f,g,h).
  \]
  This implies that the class is independent of the choice of $w$.
\end{proof}

\subsection{A relation between $c_{x_0, \sigma, w}$ and $a_{x_0, \alpha}$}
The short exact sequence $0 \to \mathbb{Z} \to \mathbb{R} \to S^1 \to 0$ induces the cohomology long exact sequence
\[
  \cdots \to H_{\grp}^2(\Symp(X,\omega);\mathbb{R}) \to H_{\grp}^2(\Symp(X,\omega);S^1) \xrightarrow{\delta} H_{\grp}^3(\Symp(X,\omega);\mathbb{Z}) \to \cdots,
\]
where $\delta$ is the connecting homomorphism.
Then, the following holds:
\begin{theorem}\label{thm:third_cocycle}
  The cohomology class $[c_{x_0, \sigma, w}]$ is equal to $-\delta [a_{x_0, \alpha}]$.
\end{theorem}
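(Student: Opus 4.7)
The strategy is the usual recipe for computing the connecting homomorphism: exhibit an $\mathbb{R}$-valued cochain lift of $a_{x_0,\alpha}$ whose group coboundary equals $c_{x_0,\sigma,w}$ on the nose (up to the sign dictated by the paper's convention for $\delta$ in Lemma \ref{lemma:id_connecting_hom}).

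By Propositions 3.2 and \ref{prop:indep_of_choices} the cohomology classes do not depend on the choices of $\alpha$ and $w$, so I will make compatible choices. Given $w \in C^2(X;\mathbb{Z})$ representing $[\omega]_{\mathbb{Z}}$, the real cocycle $I_\omega - w$ represents $0 \in H^2(X;\mathbb{R})$, so there exists $\beta \in C^1(X;\mathbb{R})$ with $\delta \beta = I_\omega - w$. Setting $\alpha := j \circ \beta$, where $j\colon \mathbb{R} \to S^1$ is the quotient, gives $\delta \alpha = j I_\omega$, so $\alpha$ is a valid choice for defining $a_{x_0,\alpha}$. I then define the $\mathbb{R}$-valued two-cochain
\[
  \tilde{a}(g,h) = \int_{\sigma(h(x_0))} g^*\beta - \beta
\]
evaluated on the specific path $\sigma(h(x_0))$ provided by the section. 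Since $g^*\alpha - \alpha$ is a genuine cocycle in $C^1(X;S^1)$, its integral is path-independent, so reducing $\tilde{a}$ modulo $\mathbb{Z}$ recovers $a_{x_0,\alpha}$, confirming that $\tilde{a}$ is a lift.

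The core calculation is then to expand
\[
  \delta \tilde{a}(f,g,h) = \tilde{a}(g,h) - \tilde{a}(fg,h) + \tilde{a}(f,gh) - \tilde{a}(f,g).
\]
Using $(fg)^*\beta = g^* f^*\beta$ and the change-of-variable identity $\int_{\sigma} g^*\phi = \int_{g_*\sigma} \phi$, the first difference rewrites as $-\int_{g\sigma(h(x_0))}(f^*\beta - \beta)$. Collecting all four terms produces an integral of $f^*\beta - \beta$ over the $1$-chain $-g\sigma(h(x_0)) + \sigma(gh(x_0)) - \sigma(g(x_0))$, which is exactly $-\partial \Delta_{x_0,\sigma}(g,h)$ by the definition of $\Delta_{x_0,\sigma}(g,h)$. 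Applying Stokes' formula converts this into an integral over $\Delta_{x_0,\sigma}(g,h)$ of $\delta(f^*\beta - \beta) = f^*(I_\omega - w) - (I_\omega - w)$. Since $f \in \Symp(X,\omega)$ satisfies $f^* I_\omega = I_\omega$ by Lemma \ref{lemma:I_pullback_compati}, the integrand collapses to $-(f^*w - w)$, and an overall sign finally yields the pointwise identity $\delta \tilde{a} = c_{x_0,\sigma,w}$ as $\mathbb{Z}$-valued cochains.

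By the construction of the connecting homomorphism $\mathbf{d}\colon H^2_{\grp}(-;S^1)\to H^3_{\grp}(-;\mathbb{Z})$ from the coefficient sequence $0 \to \mathbb{Z} \to \mathbb{R} \to S^1 \to 0$, applied with the paper's sign convention (compare the computation of $\delta(\id_{S^1}) = -e(\mathbb{R})$ in Lemma \ref{lemma:id_connecting_hom}), the equality $\delta \tilde{a} = c_{x_0,\sigma,w}$ descends in cohomology to $-\mathbf{d}[a_{x_0,\alpha}] = [c_{x_0,\sigma,w}]$, which is the theorem. The only real obstacle is bookkeeping: one must align the alternating signs of the group coboundary, the oriented boundary $\partial \Delta_{x_0,\sigma}(g,h)$, the Stokes pairing, and the sign convention for the connecting homomorphism to reach precisely the negative sign stated in the theorem; the analytic content is simply that symplectomorphisms preserve $I_\omega$.
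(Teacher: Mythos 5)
Your proposal follows essentially the same route as the paper's proof: choose a real-valued cochain $\beta$ (the paper's $\overline{\alpha}$) lifting $\alpha$, form the $\RR$-valued lift $\tilde a(g,h)=\int_{\sigma(h(x_0))}g^*\beta-\beta$ of $a_{x_0,\alpha}$, compute its group coboundary as an integral of $f^*\beta-\beta$ over the chain $\sigma(gh(x_0))-\sigma(g(x_0))-g\sigma(h(x_0))$, and use $f^*I_\omega=I_\omega$ to identify the result with $\pm c_{x_0,\sigma,w}$ for the compatible choice $\delta\beta=I_\omega-w$. The only structural difference is the direction of the compatibility (you start from $w$ and manufacture $\alpha$; the paper starts from $\alpha$ and sets $w=I_\omega-\delta\overline{\alpha}$), which is harmless by the independence propositions. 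The one point you should repair is the final sign bookkeeping, which as written is internally inconsistent: you assert the pointwise identity $\delta\tilde a=c_{x_0,\sigma,w}$ and then conclude $-\mathbf d[a_{x_0,\alpha}]=[c_{x_0,\sigma,w}]$, but with the convention of Lemma \ref{lemma:id_connecting_hom} the connecting homomorphism is represented by the coboundary of the lift with \emph{no} extra sign, so $\delta\tilde a=c_{x_0,\sigma,w}$ would yield $[c_{x_0,\sigma,w}]=+\mathbf d[a_{x_0,\alpha}]$. To get the stated theorem you need the pointwise identity $\delta\tilde a=-c_{x_0,\sigma,w}$, which is what the paper's computation (\ref{eq:a_c}) produces; since the whole content of the statement is this sign, you should carry the orientation of $\partial\Delta_{x_0,\sigma}(g,h)$ and the identity $f^*(\delta\beta)-\delta\beta=-\bigl(f^*(I_\omega-\delta\beta)-(I_\omega-\delta\beta)\bigr)$ through explicitly rather than deferring them to "an overall sign."
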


\begin{proof}
  %By theorem \ref{thm:cocycle_description}, a cocycle of $e(Q)$ is given by $a$.
  Let $\overline{\alpha} \in C^1(X;\mathbb{R})$ be a lift of $\alpha \in C^1(X;S^1)$, that is, the cochain $\overline{\alpha}$ satisfies $j \overline{\alpha} = \alpha$ under the change of coefficients homomorphism $j \colon C^1(X;\mathbb{R}) \to C^1(X;S^1)$.
  We set
  \[
    \overline{a}_{x_0, \alpha}(g,h) = \int_{\sigma(h(x_0))} g^*\overline{\alpha} - \overline{\alpha}.
  \]
  Note that the group cochain $\overline{a}_{x_0, \alpha}$ in $C_{\grp}^2(\Symp(X,\omega);\mathbb{R})$ is a lift of the cocycle $a_{x_0, \alpha} \in C_{\grp}^2(\Symp(X,\omega);S^1)$.
  By the definition of the connecting homomorphism, one of the cocycles of $\delta [a_{x_0, \alpha}]$ is given by $\delta \overline{a}_{x_0, \alpha} \in C^3(\Symp(X,\omega);\mathbb{Z})$.
  For $f,g,h \in \Symp(X,\omega)$, we have
  \begin{align}\label{eq:a_c}
    &\delta \overline{a}_{x_0, \alpha}(f,g,h)\\ \nonumber
    = & \int_{\gamma(h(x_0))}g^*\overline{\alpha} - \overline{\alpha} - \int_{\gamma(h(x_0))}g^*f^*\overline{\alpha} - \overline{\alpha} + \int_{\gamma(gh(x_0))}f^*\overline{\alpha} - \overline{\alpha} - \int_{\gamma(g(x_0))}f^*\overline{\alpha} - \overline{\alpha}\\ \nonumber
    =& \int_{\gamma(gh(x_0))-\gamma(g(x_0))-g\gamma(h(x_0))} f^*\overline{\alpha} - \overline{\alpha} = \int_{\Delta_{x_0, \sigma}(g,h)}\delta (f^*\overline{\alpha} - \overline{\alpha})\\ \nonumber
    =& \int_{\Delta_{x_0, \sigma}(g,h)}f^* (\delta \overline{\alpha} -I_{\omega}) - (\delta \overline{\alpha} - I_{\omega}) = -  \int_{\Delta_{x_0, \sigma}(g,h)}f^*(I_{\omega}-\delta \overline{\alpha}) - (I_{\omega} - \delta \overline{\alpha}),
  \end{align}
  where the third equality follows from $f^*I_{\omega} = I_{\omega}$ (see Lemma \ref{lemma:I_pullback_compati}).
  Here the cocycle $I_{\omega} - \delta \overline{\alpha}$ is in $C^2(X;\mathbb{Z})$ and represents $[\omega]_{\mathbb{Z}}$.
  Indeed, by the change of coefficients homomorphism $j\colon C^*(X;\mathbb{R}) \to C^*(X;S^1)$, we have
  \[
    j(I_{\omega} - \delta \overline{\alpha}) = jI_{\omega} - \delta j \overline{\alpha} = jI_{\omega} - jI_{\omega} = 0.
  \]
  We set $w = I_{\omega} - \delta \overline{\alpha}$, then (\ref{eq:a_c}) implies that
  \[
    \delta \overline{a}_{x_0, \alpha} = - c_{x_0, \sigma, w},
  \]
  and the theorem follows.
\end{proof}

\begin{remark}
  Let $l \colon S^1 \to [0,1) \subset \mathbb{R}$ be the section of $\mathbb{R} \to S^1$, and we set $\overline{\alpha} = l \alpha$.
  For this lift $\overline{\alpha}$ of $\alpha$, the group two-cochain $\overline{a}$ gives rise to a bounded two-cochain.
  Thus, when $w = I_{\omega} - \delta \overline{\alpha} = I_{\omega} - l\alpha$, the group three-cocycle $c_{x_0, \sigma, w}$ is a bounded cocycle.
\end{remark}

\section{The action homomorphism and the group two-cocycle}\label{sec:action_hom}

%In this section, we assume that the one-connected and integral symplectic manifold $(X, \omega)$ is compact.
The goal of this section is to prove Theorem \ref{thm:action_hom_class_a}, which states that the relation between the cohomology class $[a_{x_0, \alpha}]$ and Weinstein's action homomorphism.

Let us recall the definition of the action homomorphism $\mathbf{A} \colon \pi_1(\Ham_c(X, \omega)) \to S^1$.
For a loop $\{ \varphi_t \}_{0 \leq t \leq 1}$ in $\Ham(X, \omega)$ with $\varphi_0 = \varphi_1 = \id_{X}$, let $H_t$ be the time-dependent normalized Hamiltonian which generates the loop $\{ \varphi_t \}_{0 \leq t \leq 1}$.
%Note that, in our setting, the group $\Ham_c(X,\omega)$ coincides with the compactly supported Hamiltonian diffeomorphism group since the first cohomology group of $X$ is trivial.
For a base point $x_0 \in X$, let $\Delta_{x_0}$ denote a two-disk bounded by the loop $\varphi_t(x_0)$ in $X$.
Then, the map $\mathbf{A}$ is defined\footnote{Here we employ a sign convention used in \cite{MR3177909}.} by
\[
  \mathbf{A}([\varphi_t]) = \int_{\Delta_{x_0}} \omega - \int_{0}^{1} H_t(\varphi_t(x_0)) dt \  (\text{mod} \ \ZZ).
\]
This is a well-defined homomorphism (\cite{MR990190}; see also \cite[Section 2.1]{MR3177909}).

Let $(E_r^{p,q}, d_r^{p,q})$ be the Hochschild-Serre spectral sequence of the extension
\[
  0 \to \pi_1(\Ham(X, \omega)) \to \tHam(X,\omega) \to \Ham(X, \omega) \to 1.
\]
To see a cocycle of the class $d_2^{0,1}(\mathbf{A}) \in H_{\grp}^2(\Ham(X,\omega);S^1)$ by using Lemma \ref{lem:transgression_description}, we define a map
\[
  \mathcal{A}_{x_0, \alpha} \colon \tHam(X,\omega) \to S^1
\]
whose restriction to $\pi_1(\Symp(X, \omega)_0)$ is equal to $\mathbf{A}$.
Recall that the singular one-cochain $\alpha \in C^1(X;S^1)$ satisfies $\delta \alpha = jI_\omega$ (see Section \ref{subsec:2cocycle}).
For an element $h$ of $\tSymp(X,\omega)_0$, we take a path $\{ h_t \}_{0 \leq t \leq 1}$ in $\Symp(X,\omega)_0$ with $h_0 = \id_X$ which represents $h$.
Then, we set %define a map $\mathcal{A}_{x_0} \colon \tSymp(X,\omega)_0 \to S^1$ by setting
\[
  \mathcal{A}_{x_0, \alpha}(h) = \int_{\{ h_t(x_0) \}_{0 \leq t \leq 1}} \alpha - \left(\int_0^1 H_t(\varphi_t(x_0)) dt \ (\text{mod} \ \ZZ) \right).
\]
Here $H_t$ is the time-dependent normalized Hamiltonian which generates $h_t$ (see \cite{MR1826128} for example).
The well-defineness of the map $\mathcal{A}_{x_0}$ is shown in the same way as in $\mathbf{A}$.
%Moreover, this map $\mathbf{A}$ is independent of the choice of $x_0$.
Since $\delta \alpha = j I_{\omega}$, the restriction of $\mathcal{A}_{x_0, \alpha}$ to $\pi_1(\Symp(X,\omega)_0)$ is equal to the action homomorphism $\mathbf{A}$.

\begin{lemma}\label{lem:action_hom_cocycle}
  Let $p \colon \tSymp(X,\omega)_0 \to \Symp(X,\omega)_0$ be the universal covering.
  Then, the coboundary $\delta \mathcal{A}_{x_0, \alpha}$ is equal to $p^*i^*a_{x_0, \alpha}$.
\end{lemma}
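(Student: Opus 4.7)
The plan is a direct calculation of $\delta\mathcal{A}_{x_0,\alpha}(\tilde g,\tilde h)$ from the definition, with the cancellations coming from the behaviour of Hamiltonians under composition of symplectomorphisms.

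First I would fix smooth paths $\{g_t\}_{t\in[0,1]}$ and $\{h_t\}_{t\in[0,1]}$ in $\Symp(X,\omega)_0$ starting at $\id_X$ representing $\tilde g,\tilde h\in\tSymp(X,\omega)_0$, with normalized time-dependent Hamiltonians $G_t$ and $H_t$; write $g=g_1=p(\tilde g)$ and $h=h_1=p(\tilde h)$. Since $\mathcal{A}_{x_0,\alpha}$ is well-defined on the universal cover, I am free to choose a convenient representative of the product $\tilde g\tilde h$; I would take the concatenation
\[
k_t = \begin{cases} g_{2t}, & t\in[0,1/2],\\ g\circ h_{2t-1}, & t\in[1/2,1],\end{cases}
\]
which is a path from $\id_X$ to $gh$ in the class $\tilde g\tilde h$.

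The key computational input is that for a symplectomorphism $g$ and a Hamiltonian isotopy $\{h_s\}$ generated by $H_s$, the isotopy $\{g\circ h_s\}$ is generated by $H_s\circ g^{-1}$; this is a short calculation using $g_*X_{H_s}=X_{H_s\circ g^{-1}}$. Consequently $k_t$ is generated by $2G_{2t}$ on $[0,1/2]$ and $2H_{2t-1}\circ g^{-1}$ on $[1/2,1]$, and a change of variables (together with $g^{-1}(g(h_s(x_0)))=h_s(x_0)$) yields
\[
\int_0^1 K_t(k_t(x_0))\,dt \;=\; \int_0^1 G_s(g_s(x_0))\,ds \;+\; \int_0^1 H_s(h_s(x_0))\,ds.
\]
Similarly, the trace curve $\{k_t(x_0)\}$ is the concatenation of $\{g_s(x_0)\}$ with $g\circ\{h_s(x_0)\}$, so by naturality of pullback
\[
\int_{\{k_t(x_0)\}}\alpha \;=\; \int_{\{g_s(x_0)\}}\alpha \;+\; \int_{\{h_s(x_0)\}} g^*\alpha.
\]

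Substituting these into $\delta\mathcal{A}_{x_0,\alpha}(\tilde g,\tilde h)=\mathcal{A}_{x_0,\alpha}(\tilde h)-\mathcal{A}_{x_0,\alpha}(\tilde g\tilde h)+\mathcal{A}_{x_0,\alpha}(\tilde g)$, the Hamiltonian integrals cancel completely and the $\alpha$-integrals telescope to $\int_{\{h_s(x_0)\}}(g^*\alpha-\alpha)=\int_{x_0}^{h(x_0)}(g^*\alpha-\alpha)$, which is exactly $(p^*i^*a_{x_0,\alpha})(\tilde g,\tilde h)$ (up to the sign dictated by the coboundary and the concatenation convention defining the product on $\tSymp(X,\omega)_0$). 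I expect the one place requiring care to be the Hamiltonian bookkeeping for the concatenated path---in particular tracking the factor $g^{-1}$ coming from the symplectic pushforward identity and the rescaling in the change of variables---after which the cancellations are automatic.
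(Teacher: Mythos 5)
Your proposal is correct and follows essentially the same route as the paper's proof: represent the product $\tilde g\tilde h$ by the concatenated path, split the $\alpha$-integral along the concatenation, and observe that the Hamiltonian terms cancel (you actually supply the $g_*X_{H_s}=X_{H_s\circ g^{-1}}$ computation that the paper delegates to a citation). On the sign, where you hedge: carrying out the telescoping gives $\int_{\{h_s(x_0)\}}(\alpha-g^*\alpha)=-\bigl(p^*i^*a_{x_0,\alpha}\bigr)(\tilde g,\tilde h)$, which agrees with the final line of the paper's own computation and with how the lemma is used in the proof of Theorem \ref{thm:action_hom_class_a}, even though the lemma's statement as printed omits the minus sign.
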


\begin{proof}
  %However, the map $\mathcal{A}_{x_0} \colon \tSymp(X,\omega)_0 \to S^1$ is no longer a homomorphism. Indeed,
  For elements of $f = [f_t]$ and $g = [g_t]$ of $\tSymp(X,\omega)_0$, the product $fg$ is represented by a path $h_t$ in $\Symp(X,\omega)_0$ defined by
  \[
    h_t =
    \begin{cases}
      f_{2t} & 0 \leq t \leq 1/2 \\
      f_1 g_{2t-1} & 1/2 \leq t \leq 1.
    \end{cases}
  \]
  %For the normalized Hamiltonians $F_t$ and $G_t$ which generate $f_t$ and $g_t$, respectively, the normalized Hamiltonian $H_t$ of $h_t$ is given by
  Then we have
  \begin{align*}
    \delta \mathcal{A}_{x_0, \alpha} (f,g) = \int_{\{g_t(x_0)\}_{0 \leq t \leq 1}} \alpha - \int_{\{h_t(x_0)\}_{0\leq t \leq 1}} \alpha + \int_{\{f_t(x_0)\}_{0 \leq t \leq 1}} \alpha.
  \end{align*}
  Indeed, the second term in the definition of $\mathcal{A}_{x_0, \alpha}$ cancels each other out (see \cite[Proof of Theorem 1]{MR3177909}).
  By the definition of $h_t$, we obtain
  \begin{align*}
    \delta \mathcal{A}_{x_0, \alpha} (f, g) &= \int_{\{g_t(x_0)\}_{0 \leq t \leq 1}} \alpha - \int_{\{f_1(g_t(x))\}_{0 \leq t \leq 1}}\alpha \\
    &  = -\int_{x_0}^{g_1(x_0)}f_1^*\alpha - \alpha = -p^*i^* a_{x_0, \alpha}(f,g).
  \end{align*}
\end{proof}

\begin{comment}
The Hochschild-Serre spectral sequence for
\[
  0 \to \pi_1(\Symp(X, \omega)_0) \to \tSymp(X,\omega)_0 \to \Symp(X, \omega)_0 \to 1
\]
induces a map
\[
  d_2^{0,1} \colon H_{\grp}^1(\pi_1(\Symp(X, \omega)_0);S^1) \to H_{\grp}^2(\Symp(X,\omega)_0;S^1).
\]
Since the first cohomology group is isomorphic to the vector space of homomorphisms, $\mathbf{A}$ is considered as an element of $H_{\grp}^1(\pi_1(\Symp(X, \omega)_0);S^1)$.
\begin{theorem}
  Let $i \colon \Symp(X, \omega)_0 \to \Symp(X, \omega)$ be the inclusion.
  Then, the cohomology class $i^*[a_{x_0, \alpha}]$ is equal to $-d_2^{0,1}(\mathbf{A})$.
\end{theorem}
\end{comment}

\begin{proof}[Proof of Theorem \ref{thm:action_hom_class_a}]
  By Lemmas \ref{lem:transgression_description} and \ref{lem:action_hom_cocycle}, the cocycle $-i^*a_{x_0, \alpha}$ represents the class $d_2^{0,1}(\mathbf{A})$.
  Hence the theorem follows.
\end{proof}

\begin{comment}
\begin{proof}
  Let $Q'$ be the preimage of $\Symp(X,\omega)_0$ in $Q$.
  Then, there exists a commutative diagram
  \begin{align}\label{weinstein_diagram}
    \xymatrix{
    0 \ar[r] & \pi_1(\Symp(X,\omega)_0) \ar[r] \ar[d]^-{\mathbf{A}} & \tSymp(X,\omega)_0 \ar[r] \ar[d] & \Symp(X,\omega)_0 \ar[r] \ar@{=}[d] & 1 \\
    0 \ar[r] & S^1 \ar[r] & Q' \ar[r] & \Symp(X,\omega)_0 \ar[r] & 1.
    }
  \end{align}
  Here the map $\tSymp(X,\omega)_0 \to Q$ is obtained by the integration of the canonical section between the Lie algebras of $\tSymp(X, \omega)_0$ and $Q$ (see \cite[Section 5]{MR990190}; see also \cite[40.3. Theorem]{MR1471480}).
  %  The Lie algebra extension corresponding to (\ref{prequantum_extension}) splits, and the integration of the canonical section defines the homomorphism $\tSymp(X,\omega) \to Q$.
  %  Moreover, the restriction of the homomorphism to $\pi_1(\Symp(X,\omega)_0) \to S^1$ is equal to $\mathbf{A}$ (see \cite[Section 5]{MR990190}).
  Let $(E_r^{p,q}, d_r^{p,q})$ and $(E_r^{'p,q}, d_r^{'p,q})$ be the Hochschild-Serre spectral sequence of the first and second extensions in (\ref{weinstein_diagram}), respectively.
  By Lemma \ref{lemma:identity-Euler_class}, Theorem \ref{thm:cocycle_description}, and the naturality, we have
  \[
    i^*[a_{x_0, \alpha}] = -e(Q') = d_2^{'0,1}(\id_{S^1}) = d_2^{0,1}(\mathbf{A}).
  \]
\end{proof}
\end{comment}

\section{Prequantum extension}\label{sec:prequantum_extension}

For an integral symplectic manifold $(X, \omega)$, there exists a principal $S^1$-bundle $p \colon P \to X$ called the \textit{prequantization $S^1$-bundle} (see \cite{MR2397738} for example).
The total space $P$ of the prequantization $S^1$-bundle admits a connection form $\theta$ whose curvature form is equal to the symplectic form $\omega$.
Note that the first Chern class of the prequantization bundle is equal to the class $-[\omega]_{\ZZ}$.

The {\it quantomorphism group} $Q$ is defined by
\[
  Q = \{ \varphi \colon P \to P: \text{bundle automorphism, } \varphi^*\theta = \theta \}.
\]
%The identity component $Q_{0}$ of the quantomorphism group defines a central $S^1$-extension of the Hamiltonian diffeomorphism group (see \cite{kostant70}):
%\begin{align}\label{prequantum_extension_ham}
%  0 \to S^1 \to Q_{0} \to \Ham(X, \omega) \to 1.
%\end{align}
If the symplectic manifold is one-connected, the quantomorphism group $Q$ defines a central $S^1$-extension of the symplectomorphism group (\cite[Theorem 2.2.2]{kostant70}):
%the central extension can be extended to the symplectomorphism group, that is, the following is a central $S^1$-extension (see \cite[Theorem 2.2.2]{kostant70}):
\begin{align}\label{prequantum_extension}
  0 \to S^1 \to Q \to \Symp(X,\omega) \to 1.
\end{align}
Central extension (\ref{prequantum_extension}) is called the \textit{prequantum extension}.

%Since the prequantum extension (\ref{prequantum_extension}) $0 \to S^1 \to Q \to \Symp(X, \omega) \to 1$
%is a central $S^1$-extension, there exists the corresponding group cohomology class
Let $e(Q) \in H_{\grp}^2(\Symp(X, \omega);S^1)$ be the cohomology class corresponding to (\ref{prequantum_extension}). %defined by (\ref{euler_cocycle}).
%In Subsection \ref{subsec:2cocycle}, we constructed another group cohomology class $[a_{x_0, \alpha}] \in H_{\grp}^2(\Symp(X, \omega);S^1)$.
The following theorem clarifies a relation between the cohomology classes $e(Q)$ and $[a_{x_0, \alpha}]$.

\begin{theorem}\label{thm:cocycle_description}
  The cohomology class $[a_{x_0, \alpha}] \in H_{\grp}^2(\Symp(X,\omega);S^1)$ is equal to $-e(Q)$.
\end{theorem}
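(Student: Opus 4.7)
The plan is to apply Lemma \ref{lemma:identity-Euler_class} to the prequantum extension (\ref{prequantum_extension}), which reduces the theorem to identifying the transgression $d_2^{0,1}(\id_{S^1}) \in H_{\grp}^2(\Symp(X,\omega);S^1)$ of the associated Hochschild-Serre spectral sequence with the class $[a_{x_0,\alpha}]$; indeed, combined with $e(Q) = -d_2^{0,1}(\id_{S^1})$, this gives exactly $[a_{x_0,\alpha}] = -e(Q)$.

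I will pin down this transgression via Lemma \ref{lem:transgression_description} by constructing an explicit normalized cochain $\mathcal{A} \colon Q \to S^1$ whose restriction to $S^1$ is the identity. Fix a point $p_0 \in P_{x_0}$ in the fiber above the base point, and (after adjusting the section) assume $\sigma(x_0)$ is the constant path at $x_0$. For each $y \in X$, write $\hat{\sigma}(y) \in P_y$ for the endpoint of the horizontal lift of $\sigma(y)$ (with respect to the prequantum connection $\theta$) starting at $p_0$. For $\varphi \in Q$ covering $g \in \Symp(X,\omega)$, define $\mathcal{A}(\varphi) \in S^1$ by $\varphi(p_0) = \hat{\sigma}(g(x_0)) \cdot \mathcal{A}(\varphi)$.

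The central computation is that, because every element of $Q$ preserves $\theta$ (hence sends horizontal paths to horizontal paths) and commutes with the $S^1$-action, for $\varphi, \psi \in Q$ covering $g, h \in \Symp(X,\omega)$ one finds
\[
  \mathcal{A}(\varphi \psi) = \mathcal{A}(\varphi) + \mathcal{A}(\psi) + \mathrm{hol}\bigl(\partial \Delta_{x_0, \sigma}(g,h)\bigr),
\]
where $\mathrm{hol}$ denotes the holonomy of the loop $\sigma(g(x_0)) + g\sigma(h(x_0)) - \sigma(gh(x_0))$. Hence $\delta \mathcal{A}$ descends through $\pi$, so $\mathcal{A} \in Z_2^{0,1}$ and by Lemma \ref{lem:transgression_description} the class $d_2^{0,1}(\id_{S^1})$ is represented by the cocycle $(g,h) \mapsto -\mathrm{hol}\bigl(\partial \Delta_{x_0, \sigma}(g,h)\bigr)$.

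To finish, I will rewrite this holonomy in terms of $\alpha$. Since the curvature of $\theta$ equals $\omega$ and $c_1(P) = -[\omega]_{\ZZ}$, the holonomy of a loop bounding a disk $D \subset X$ is $-\int_D \omega$ modulo $\ZZ$; then $\delta \alpha = jI_{\omega}$ gives $\int_D \omega \equiv \int_{\partial D} \alpha \pmod{\ZZ}$. Expanding $\partial \Delta_{x_0, \sigma}(g,h) = \sigma(g(x_0)) + g\sigma(h(x_0)) - \sigma(gh(x_0))$ and comparing with $a_{x_0,\alpha}(g,h) = \int_{g\sigma(h(x_0))}\alpha - \int_{\sigma(h(x_0))}\alpha$, a short calculation will show that the resulting cocycle equals $a_{x_0, \alpha}$ modulo the coboundary of $b(g) := \int_{\sigma(g(x_0))} \alpha$. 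The main obstacle is to keep the various sign conventions consistent (the sign of $c_1(P)$ relative to $\omega$, the orientation of $\partial \Delta_{x_0,\sigma}(g,h)$, and the sign in Lemma \ref{lemma:identity-Euler_class}), since a single flip would exchange $e(Q)$ and $-e(Q)$; once that bookkeeping is done, the argument closely parallels the computation already carried out in Section \ref{sec:action_hom} for the action homomorphism.
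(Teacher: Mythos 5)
Your proposal is correct, and its skeleton coincides with the paper's: both reduce the statement to computing the transgression $d_2^{0,1}(\id_{S^1})$ in the Hochschild--Serre spectral sequence of the prequantum extension, via exactly Lemmas \ref{lemma:identity-Euler_class} and \ref{lem:transgression_description}, so that everything hinges on exhibiting a normalized cochain on $Q$ restricting to $\id_{S^1}$ whose coboundary is pulled back from $\Symp(X,\omega)$. Where you differ is in the construction of that cochain. The paper first shows (Lemma \ref{lemma:cohomologous_to_zero}) that $jI_\theta - p^*\alpha$ is a coboundary on $P$ and sets $\tau_{y_0}(\varphi)=\int_{y_0}^{\varphi(y_0)} jI_\theta - p^*\alpha$, which yields $\delta\tau = \pi^* a_{x_0,\alpha}$ on the nose (Lemma \ref{lemma:curvature}); your $\mathcal{A}$ is instead defined geometrically by comparing $\varphi(p_0)$ with the endpoint of a horizontal lift, so that $\delta\mathcal{A}$ appears as a holonomy, which you then convert into $\alpha$-integrals using $\delta\alpha = jI_\omega$ and the relation $\mathrm{hol}(\partial D) = -\int_D\omega \pmod{\ZZ}$. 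I checked your signs: with the paper's conventions ($d\theta = p^*\theta$-curvature equal to $p^*\omega$ and $\int_{\gamma}\theta = u$ for a fiber path from $y_0$ to $y_0\cdot u$) one indeed gets $\delta\mathcal{A}(\varphi,\psi) = -\mathrm{hol}(\partial\Delta_{x_0,\sigma}(g,h)) = a_{x_0,\alpha}(g,h) + \delta b(g,h)$ with $b(g)=\int_{\sigma(g(x_0))}\alpha$, so your cocycle represents $[a_{x_0,\alpha}]$ and the conclusion follows. The trade-off is that the paper's cochain gives the cocycle exactly and avoids holonomy arguments (and hence any smoothness demands on the section $\sigma$, which in fact plays no role in $a_{x_0,\alpha}$), while your version makes the geometric content --- the prequantum connection's holonomy as the source of the extension class --- more visible and runs parallel to the action-homomorphism computation of Section \ref{sec:action_hom}, as you note.
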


To prove Theorem \ref{thm:cocycle_description}, we prepare the following lemmas.
Let us recall that $p \colon P \to X$ is the prequantization $S^1$-bundle with the connection form $\theta \in \Omega^1(P)$ satisfying $d\theta = p^*\omega$.

\begin{lemma}\label{lemma:cohomologous_to_zero}
  A singular cochain $j I_{\theta} - p^* \alpha \in C^1(P;S^1)$ is a coboundary, that is, there exists a singular zero-cochain $\beta \in C^0(P;S^1)$ such that the equality $\delta \beta = j I_{\theta} - p^* \alpha$ holds.
\end{lemma}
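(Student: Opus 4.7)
The plan is to construct $\beta$ by path-integrating the cocycle $c := jI_\theta - p^*\alpha$. First I verify $c$ is closed: since the de Rham integration $I$ commutes with $d$ and pullback, $\delta(jI_\theta) = jI_{d\theta} = jI_{p^*\omega} = p^*(jI_\omega)$, while $\delta(p^*\alpha) = p^*(\delta\alpha) = p^*(jI_\omega)$ by the defining property of $\alpha$, so the two terms cancel. I would then fix a base point $\tilde{x}_0 \in P$, choose for each $y \in P$ a smooth path $\gamma_y$ from $\tilde{x}_0$ to $y$, and define the $0$-cochain $\beta(y) = c(\gamma_y) \in S^1$. For any smooth singular $1$-simplex $\sigma \colon \Delta^1 \to P$, the $1$-chain $\gamma_{\sigma(1)} - \gamma_{\sigma(0)} - \sigma$ has zero boundary; once I know $c$ evaluates to zero on every $1$-cycle, the equality $\delta\beta(\sigma) = c(\sigma)$ follows automatically.

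It therefore suffices to show $[c] = 0$ in $H^1(P;S^1)$. By the universal coefficient theorem, $H^1(P;S^1) \cong \Hom(H_1(P;\ZZ),S^1) = \Hom(\pi_1(P)^{\ab},S^1)$, and since $\pi_1(X) = 0$, the long exact homotopy sequence
\[
  \pi_2(X) \to \pi_1(S^1) \to \pi_1(P) \to \pi_1(X) = 0
\]
shows that $\pi_1(P)$ is generated by the class of a fiber loop $\ell_F$. It remains to compute $c(\ell_F) \in S^1$.

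On the fiber, the normalization of the principal $S^1$-connection (with $S^1 = \RR/\ZZ$) gives $\int_{\ell_F}\theta \in \ZZ$, so $jI_\theta(\ell_F) = 0$. Meanwhile $p \circ \ell_F$ is a constant $1$-simplex $\sigma_1^x$ at some $x \in X$. To handle $(p^*\alpha)(\ell_F) = \alpha(\sigma_1^x)$, I evaluate the identity $\delta\alpha = jI_\omega$ on the constant $2$-simplex $\sigma_2^x$: all three face maps produce $\sigma_1^x$, so $(\delta\alpha)(\sigma_2^x) = \alpha(\sigma_1^x)$, while $jI_\omega(\sigma_2^x) = 0$ since the pullback of $\omega$ by a constant map vanishes. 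Hence $\alpha(\sigma_1^x) = 0$ and $c(\ell_F) = 0$, completing the argument. The most delicate point is the vanishing of $\alpha$ on degenerate $1$-simplices: this is not part of the hypothesis but is forced by the cocycle identity $\delta\alpha = jI_\omega$, and without it the fiber computation would fail.
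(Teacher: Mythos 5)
Your proof is correct and follows essentially the same route as the paper: both reduce the vanishing of $[jI_\theta - p^*\alpha]$ to loops in a fiber via the surjection $\pi_1(S^1) \to \pi_1(P)$ coming from the one-connectedness of $X$, and then use that the holonomy integral $\int_{\ell_F}\theta$ of the connection form around the fiber lies in $\ZZ$. Your explicit check that $\alpha$ vanishes on the constant $1$-simplex (forced by $\delta\alpha = jI_\omega$ evaluated on a constant $2$-simplex) is a detail the paper's equality $\int_{\gamma} jI_{\theta} - \int_{p\gamma}\alpha = \int_{\gamma} jI_{\theta}$ silently uses, so it is a welcome clarification rather than a deviation.
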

\begin{proof}
  %By Lemma \ref{lemma:I_pullback_compati}, we have
  %\[
  %  \delta (j I_{\theta} - p^* \alpha) = j I_{p^*\alpha} - p^* j I_{\omega} = j p^* I_{\omega} - j p^* I_{\omega} = 0.
  %\]
  %Hence the cochain $j I_{\theta} - p^* \alpha$ is a cocycle.
  Since $X$ is one-connected, the inclusion $S^1 \to P$ as a fiber induces a surjection $\pi_1(S^1) \to \pi_1(P)$.
  Therefore, for any loop $\gamma'$ in $P$, there exists a loop $\gamma$ in a fiber of $P \to X$ which is homotopic to the loop $\gamma'$.
  Since $\alpha \in C^1(X;S^1)$ is a singular one-cochain satisfying $\delta \alpha = jI_{\omega}$, we have
  \begin{align}\label{eq:1}
    \int_{\gamma'} j I_{\theta} - p^* \alpha = \int_{\gamma} j I_{\theta} - p^* \alpha
    = \int_{\gamma} j I_{\theta} - \int_{p\gamma} \alpha = \int_{\gamma} j I_{\theta}.
  \end{align}
  where the symbol $\int_{\gamma}$ denotes the pairing of a cocycle and the cycle $\gamma$.
  Note that the last term of (\ref{eq:1}) is equal to the projection of the value $\int_{\gamma} \theta \in \mathbb{R}$ to $S^1$.
  Since the form $\theta$ is a connection form, the value $\int_{\gamma} \theta$ is in $\mathbb{Z}$, that is, $\int_{\gamma} j I_{\theta} = 0$ holds.
  Therefore, the cocycle $j I_{\theta} - p^* \alpha$ is cohomologous to zero, and the lemma follows.
\end{proof}

By Lemma \ref{lemma:cohomologous_to_zero}, there exists a singular cochain $\beta \in C^0(X;S^1)$ satisfying $\delta\beta = j I_{\theta} - p^* \alpha$.
For a base point $y_0 \in P$ with $p(y_0) = x_0$, we define a group cochain $\tau_{y_0} \in C_{\grp}^1(Q;S^1)$ by setting
\[
  \tau_{y_0}(\varphi) = \int_{y_0}^{\varphi (y_0)} j I_{\theta} - p^* \alpha = \beta(\varphi (y_0)) - \beta(y_0)
\]
for any $\varphi \in Q$.

\begin{lemma}\label{lemma:connection_cochain}
  The restriction $\tau|_{S^1} \colon S^1 \to S^1$ is equal to the identity map.
  Here we consider the group $S^1$ as a subgroup of the quantomorphism group $Q$ via the inclusion in (\ref{prequantum_extension}).
\end{lemma}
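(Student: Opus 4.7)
The plan is to trace through what the inclusion $S^1 \hookrightarrow Q$ does geometrically and then reduce the integral defining $\tau_{y_0}$ to a single fiber integral. Recall that in the central extension (\ref{prequantum_extension}) the subgroup $S^1$ is identified with the subgroup of $Q$ acting on $P$ by fiberwise rotations. Hence for $t \in S^1 = \RR/\ZZ$, viewed as an element of $Q$, the action on the base point is $\varphi(y_0) = y_0 \cdot t$, which still lies in the fiber $p^{-1}(x_0)$.

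Next, because the value $\tau_{y_0}(\varphi) = \beta(\varphi(y_0)) - \beta(y_0)$ may equally be computed by pairing the cocycle $jI_\theta - p^*\alpha$ with any path from $y_0$ to $\varphi(y_0)$, I would choose the vertical path $\gamma_t(s) = y_0 \cdot (st)$ for $s \in [0,1]$, lying entirely in the fiber over $x_0$. Since $p \circ \gamma_t$ is the constant path at $x_0$, the pairing $\int_{\gamma_t} p^*\alpha$ vanishes, and the problem reduces to showing $\int_{\gamma_t} jI_\theta = t$ in $S^1$.

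For that remaining piece I would invoke the defining property of a principal $S^1$-connection: the pullback of $\theta$ under the orbit map $s \mapsto y_0 \cdot s$ coincides with the canonical invariant $1$-form $ds$ on $S^1 = \RR/\ZZ$. Consequently $\int_{\gamma_t} \theta = t \in \RR$ for the standard lift of $t$ to $[0,1)$, so $\int_{\gamma_t} jI_\theta = t \in S^1$, giving $\tau_{y_0}(t) = t$ as required.

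The only subtle issue is sign/normalization consistency: the identification of the structure group of $P$ with $S^1$ in (\ref{prequantum_extension}), the normalization $d\theta = p^*\omega$, and the relation $c_1(P) = -[\omega]_\ZZ$ recorded earlier must all be compatible so that the computation yields the identity rather than its inverse. This bookkeeping is the main, but essentially routine, obstacle; once it is settled, the three steps above give the lemma immediately.
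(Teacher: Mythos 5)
Your proof is correct and takes essentially the same route as the paper's: choose a vertical path in the fiber from $y_0$ to $y_0\cdot u$, note that the $\int p^*\alpha$ term vanishes because the projected path is constant at $x_0$, and use the fact that the connection form restricts to the invariant form on fibers to conclude $\int_{\gamma}\theta = u$.
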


\begin{proof}
  For $u \in S^1 \subset Q$, let $\gamma$ be a path from $y_0$ to $y_0 \cdot u$ in the fiber over $p(y_0) = x_0$.
  Here the symbol $\cdot u$ denotes the right $S^1$-action equipped with the principal $S^1$-bundle.
  Then, we have
  \[
    \tau(u) = \int_{y_0}^{y_0 \cdot u} jI_{\theta} - p^*\alpha = j \int_{\gamma} \theta = u,
  \]
  and the lemma follows.
\end{proof}

\begin{lemma}\label{lemma:curvature}
  The equality
  \[
    \delta \tau = \pi^* (a_{x_0, \alpha}) \in C_{\grp}^2(Q;S^1)
  \]
  holds, where $\pi \colon Q \to \Symp(X,\omega)$ is the projection.
\end{lemma}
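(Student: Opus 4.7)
The plan is to rewrite $\delta\tau(\varphi_1,\varphi_2)$ as an integral over a closed $1$-chain on the prequantum bundle $P$, and then to invoke the two defining features of the quantomorphism group — the connection-preservation $\varphi^*\theta = \theta$ and the bundle equivariance $p\circ\varphi = \pi(\varphi)\circ p$ — to push the computation down to $X$ and identify it with $a_{x_0,\alpha}(g_1,g_2)$, where $g_i = \pi(\varphi_i)$.

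Concretely, I would choose smooth paths $\tilde\gamma_1,\tilde\gamma_2,\tilde\gamma_{12}$ in $P$ from $y_0$ to $\varphi_1(y_0),\varphi_2(y_0),\varphi_1\varphi_2(y_0)$, respectively. Because $\beta$ witnesses $jI_\theta - p^*\alpha$ as a coboundary, line integrals of $jI_\theta - p^*\alpha$ are path-independent and $\tau(\varphi)$ equals $\int_{y_0}^{\varphi(y_0)}(jI_\theta - p^*\alpha)$ along any such path. The combination $\tilde\gamma_1 + \varphi_1\tilde\gamma_2 - \tilde\gamma_{12}$ is a $1$-cycle, and integrating the exact cochain $jI_\theta - p^*\alpha$ over it yields
\[
\tau(\varphi_1) + \int_{\varphi_1\tilde\gamma_2}(jI_\theta - p^*\alpha) - \tau(\varphi_1\varphi_2) = 0.
\]
Next I would evaluate the middle term: the invariance $\varphi_1^*\theta = \theta$ gives $\int_{\varphi_1\tilde\gamma_2} jI_\theta = \int_{\tilde\gamma_2} jI_\theta$, while the equivariance $p\varphi_1 = g_1 p$ pushes the $\alpha$-piece to $\int_{g_1\gamma_2}\alpha$, where $\gamma_2 = p\circ\tilde\gamma_2$ runs from $x_0$ to $g_2(x_0)$ in $X$. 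Comparing with $\tau(\varphi_2) = \int_{\tilde\gamma_2}jI_\theta - \int_{\gamma_2}\alpha$ converts the middle integral into $\tau(\varphi_2) - \int_{\gamma_2}(g_1^*\alpha - \alpha) = \tau(\varphi_2) - a_{x_0,\alpha}(g_1,g_2)$. Substitution into the cycle identity then gives $\tau(\varphi_1) + \tau(\varphi_2) - \tau(\varphi_1\varphi_2) = a_{x_0,\alpha}(g_1,g_2)$, that is, $\delta\tau = \pi^* a_{x_0,\alpha}$.

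The main obstacle is essentially sign/orientation bookkeeping — checking that the chosen representative paths are benign and that $\delta\tau$ is assembled with the correct signs — but this dissolves cleanly because $jI_\theta - p^*\alpha$ is exact (forcing endpoint-only dependence of all these line integrals) and because $\varphi_1^*\theta = \theta$ kills the $\theta$-contribution to the discrepancy, leaving precisely the $\alpha$-defect that defines $a_{x_0,\alpha}$.
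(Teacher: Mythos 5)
Your proof is correct and follows essentially the same route as the paper's: both express $\delta\tau(\varphi_1,\varphi_2)$ as the integral of the exact cochain $jI_\theta - p^*\alpha$ over the closed chain $\tilde\gamma_1 + \varphi_1\tilde\gamma_2 - \tilde\gamma_{12}$, use $\varphi_1^* I_\theta = I_\theta$ to cancel the connection contribution, and push the remaining $\alpha$-defect down along $p$ to recover $a_{x_0,\alpha}(g_1,g_2)$. The only cosmetic difference is that you phrase the path-independence step as vanishing on a $1$-cycle, whereas the paper rewrites $-\int_{y_0}^{\varphi\psi(y_0)} + \int_{y_0}^{\varphi(y_0)}$ as $-\int_{\varphi(y_0)}^{\varphi\psi(y_0)}$ and then pulls back by $\varphi$; the content is identical.
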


\begin{proof}
  For $\varphi, \psi \in Q$, we set $g = \pi(\varphi)$ and $h = \pi(\psi)$.
  By the definition of the quantomorphism group $Q$ and Lemma \ref{lemma:I_pullback_compati}, we have $\varphi^* jI_{\theta} = jI_{\theta}$ and $\psi^* jI_{\theta} = jI_{\theta}$.
  Then, we obtain
  \begin{align*}
    \delta \tau(\varphi, \psi) &= \int_{y_0}^{\psi (y_0)} -\int_{y_0}^{\varphi \psi (y_0)} + \int_{y_0}^{\varphi (y_0)} j I_{\theta} - p^* \alpha\\
    &= \int_{y_0}^{\psi (y_0)} - \int_{\varphi (y_0)}^{\varphi \psi (y_0)} j I_{\theta} - p^* \alpha\\
    &= \int_{y_0}^{\psi (y_0)} (j I_{\theta} - p^* \alpha) - \varphi^* (j I_{\theta} - p^* \alpha)\\
    &= \int_{y_0}^{\psi (y_0)} p^* g^* \alpha - p^* \alpha\\
    &= \int_{x_0}^{h (x_0)} g^*\alpha - \alpha = a_{x_0, \alpha} (g,h) = \pi^* a_{x_0, \alpha} (\varphi, \psi).
  \end{align*}
\end{proof}

\begin{proof}[Proof of Theorem \ref{thm:cocycle_description}]
  Let $E_r^{p,q}$ denote the Hochschild-Serre spectral sequence of
  \[
    0 \to S^1 \to Q \to \Symp(X,\omega) \to 1
  \]
  with coefficients in $S^1$.
  Then, the transgression map $d_2^{0,1} \colon E_2^{0,1} \to E_2^{2, 0}$ induces a map
  \[
    d_2^{0,1} \colon H_{\grp}^1(S^1;S^1) \cong E_2^{0,1} \to E_2^{2, 0} \cong H_{\grp}^2(\Symp(X,\omega);S^1).
  \]
  By Lemmas \ref{lem:transgression_description}, \ref{lemma:connection_cochain}, and \ref{lemma:curvature}, an equality $d_2^{0,1} (\id_{S^1}) = [a_{x_0, \alpha}]$ holds.
  On the other hand, by Lemma \ref{lemma:identity-Euler_class}, we have $d_2^{0,1} (\id_{S^1}) = -e(Q)$.
  Therefore the theorem follows.
\end{proof}

\section{The Dixmier-Douady class of symplectic fibrations and the group three-cocycle}\label{sec:DD_class}

Let $E \to B$ be a fiber bundle whose fiber is a symplectic manifold $(X, \omega)$.
The bundle $E \to B$ is called a \textit{symplectic fibration} if the structure group can be reduced to the symplectomorphism group $\Symp(X,\omega)$.
An extension of the structure group $\Symp(X,\omega)$ to $Q$ is called the \textit{prequantum structure} or \textit{prequantum lift} of $E$.
The exisitence of prequantum structures is detected by a characteristic class $D(E) \in H^3(B;\mathbb{Z})$ called the \textit{Dixmier-Douady class}.
The Dixmier-Douady class is defined by using the cohomology with coefficients in the sheaf of $S^1$-valued continuous functions (see \cite{brylinski_93} for the definition; see also \cite{savelyev_shelukhin20}).

Under the assumption that the fiber is one-connected, the Dixmier-Douady class can be defined by using the Serre spectral sequence.
%This definition will be necessary to prove Theorem \ref{main_theorem}.
Let $(X, \omega) \to E \to B$ be a symplectic fibration with connected base space $B$.
We assume that the fiber $X$ is a one-connected and integral symplectic manifold.
Let $(E_r^{p,q}, d_r^{p, q})$ be the cohomology Serre spectral sequence with coefficients in $\mathbb{Z}$.
Then the $E_2$-page $E_2^{p,q}$ is isomorphic to $H^p(B;\mathcal{H}^q(X;\mathbb{Z}))$, where $\mathcal{H}^q(X;\mathbb{Z})$ denotes the local system.
Since $H^1(X;\mathbb{Z}) = 0$, we have $E_2^{p, 1} = 0$ for any $p$.
Therefore we have
\[
  E_3^{3, 0} = E_2^{3, 0} = H^3(B;\mathbb{Z})
\]
and
\[
  E_3^{0, 2} = E_2^{0, 2} = H^2(X;\mathbb{Z})^{\pi_1(B)}.
\]
Here $H^2(X;\mathbb{Z})^{\pi_1(B)}$ denotes the invatiant part of the monodromy action of $\pi_1(B)$ induced from the bundle $E \to B$.
The transgression map $d_3^{0, 2}\colon E_3^{0, 2} \to E_3^{3, 0}$ defines a map
\[
  d_3^{0, 2} \colon H^2(X;\mathbb{Z})^{\pi_1(B)} = E_3^{0, 2} \to
  E_3^{3, 0} = H^3(X;\mathbb{Z}).
\]
Since the structure group of $E \to B$ is reduced to $\Symp(X,\omega)$, the monodromy action of $\pi_1(B)$ preserves the symplectic form $\omega$ on the fibers.
Together with Remark \ref{rem:inj_of_ZtoR}, the action preserves the class $[\omega]_{\mathbb{Z}} \in H^2(X;\ZZ)$.
Thus, the class $[\omega]_{\mathbb{Z}}$ is in $H^2(X;\mathbb{Z})^{\pi_1(B)}$.
By the naturality of the Serre spectral sequence, %the cohomology class $d_3^{0,2}[\omega]_{\mathbb{Z}}\in H^3(B;\mathbb{Z})$ also has the naturality. Thus
the cohomology class $d_3^{0,2}[\omega]_{\mathbb{Z}}$ gives rise to a characteristic class of symplectic fibrations.

\begin{proposition}[{\cite[Theorem 4.1]{carey_crowley_murray98}}]\label{prop:dd_def_ss}
  The characteristic class $-d_3^{0,2}[\omega]_{\mathbb{Z}}$ is equal to the Dixmier-Douady class $D(E)$.
\end{proposition}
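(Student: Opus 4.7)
The plan is to reduce to the universal case and then compare the transgression in the Serre spectral sequence with the classifying obstruction of the prequantum lift.

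First, by naturality of the Serre spectral sequence and of the Dixmier--Douady class, both sides behave functorially under pullback of symplectic fibrations. Since every symplectic fibration with fiber $(X,\omega)$ is pulled back from the universal one $X \to E_{\mathrm{univ}} := E\Symp(X,\omega) \times_{\Symp(X,\omega)} X \to B\Symp(X,\omega)$, it is enough to verify the equality for $E_{\mathrm{univ}}$. Equivalently, I would work in $H^3(B\Symp(X,\omega);\ZZ)$ and show that the universal transgression $-d_3^{0,2}[\omega]_{\ZZ}$ coincides with the universal Dixmier--Douady class $D$.

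The key step is to interpret $d_3^{0,2}[\omega]_{\ZZ}$ as the obstruction to promoting the fiberwise prequantization $P \to X$ to a global $S^1$-bundle on the total space of a symplectic fibration. Concretely, consider the fibration $X \to EQ\times_Q X \to BQ$ obtained from the prequantum extension $0 \to S^1 \to Q \to \Symp(X,\omega) \to 1$. Because $Q$ acts on $P$, the Borel construction $EQ \times_Q P \to EQ \times_Q X$ is an $S^1$-bundle, and its first Chern class lies in $H^2(EQ \times_Q X;\ZZ)$ and restricts on each fiber to $c_1(P)=-[\omega]_{\ZZ}$. Consequently, in the Serre spectral sequence of $EQ \times_Q X \to BQ$, the class $-[\omega]_{\ZZ}$ is a permanent cycle, so its transgression vanishes; by naturality along the map $BQ \to B\Symp(X,\omega)$ induced by the surjection $Q \to \Symp(X,\omega)$, the pullback of $d_3^{0,2}[\omega]_{\ZZ}$ to $H^3(BQ;\ZZ)$ is zero.

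To upgrade this vanishing into a precise identification with $D$, I would fit both classes into a comparison of Serre spectral sequences, using that the universal Dixmier--Douady class is, by definition, the primary obstruction to a prequantum lift, i.e.\ the $k$-invariant (the transgression of the fundamental class of the fiber $K(\ZZ,2)=BS^1$) of the fibration $BS^1 \to BQ \to B\Symp(X,\omega)$. Mapping the fibration $EQ\times_Q X \to BQ$ to $E_{\mathrm{univ}} \to B\Symp(X,\omega)$ gives a morphism of Serre spectral sequences, and the Chern class of $EQ\times_Q P$ supplies an element of $H^2(EQ\times_Q X;\ZZ)$ whose image in $H^2(X;\ZZ)$ is $-[\omega]_{\ZZ}$ and whose image under the edge map to $H^2(BS^1;\ZZ)$ is the canonical generator. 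Chasing these through the two spectral sequences yields the identity $-d_3^{0,2}[\omega]_{\ZZ} = D$ up to a sign that is fixed by tracking the orientation conventions for the prequantum $S^1$-bundle (whose Chern class is $-[\omega]_{\ZZ}$).

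The main obstacle is this last bookkeeping step: while the vanishing arguments are formal, matching the transgression of $[\omega]_{\ZZ}$ on the nose with the $k$-invariant $D$ requires a careful naturality diagram that aligns the fibers $X$ and $BS^1$ via the map $X \to BS^1$ classifying $P$, and then compares the $E_3^{0,2} \to E_3^{3,0}$ differentials in both spectral sequences. Once this comparison is set up, the sign in Proposition \ref{prop:dd_def_ss} follows from the sign in $c_1(P) = -[\omega]_{\ZZ}$.
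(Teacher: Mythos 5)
First, note that the paper gives no proof of this proposition: it is quoted from Carey--Crowley--Murray \cite[Theorem 4.1]{carey_crowley_murray98}, with the Dixmier--Douady class $D(E)$ itself defined sheaf-theoretically following \cite{brylinski_93}. So there is no internal argument to compare yours against; the closest analogue in the paper is the universal computation in Section \ref{sec:DD_class}, which your sketch partly parallels.

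Evaluated on its own terms, your proposal has the right shape (reduce to the universal fibration, pull $[\omega]_{\ZZ}$ back from $BS^1$ via a classifying map of $P$, and identify the transgression of $c_1$ in $BS^1 \to BQ \to B\Symp(X,\omega)$ with $D$), but two steps are genuinely missing. First, you declare that $D$ is ``by definition'' the $k$-invariant of $BS^1 \to BQ \to B\Symp(X,\omega)$; the paper's $D(E)$ is the sheaf-cohomological Dixmier--Douady class, and the identification of that class with the obstruction to a prequantum lift (equivalently with this $k$-invariant) is essentially the content being cited, so assuming it begs most of the question. Second, and more concretely, your intermediate argument with $EQ\times_Q X \to BQ$ only shows that $d_3^{0,2}[\omega]_{\ZZ}$ dies after pullback to $H^3(BQ;\ZZ)$ --- a necessary condition shared by $D$ but far from an identification --- and the ``edge map to $H^2(BS^1;\ZZ)$'' you invoke is not well defined for that fibration, whose fiber is $X$, not $BS^1$. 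The step that actually closes the argument is the construction of a map of fibrations over $B\Symp(X,\omega)$ from $X \to E \to B\Symp(X,\omega)$ to $BS^1 \to BQ \to B\Symp(X,\omega)$ restricting on fibers to the classifying map $f\colon X \to BS^1$ of $P$, so that $-d_3^{0,2}[\omega]_{\ZZ} = d_3^{0,2}f^*(c_1) = d_3^{\prime 0,2}(c_1)$. You correctly flag this as ``the main obstacle'' but do not carry it out; it is nontrivial (one must produce a $Q$-equivariant map $E\Symp(X,\omega)\times P \to EQ$ and descend), and it is exactly what the unnumbered lemma establishing diagram (\ref{diagram:MtoBS^1}) does in the paper. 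As written, then, the proposal is an outline with the decisive construction and the decisive identification both deferred.
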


%We omit the proof of this proposition since it is the same as in \cite[Theorem 4.1]{carey_crowley_murray98}, .

\begin{comment}
\begin{remark}
  In \cite{savelyev_shelukhin20}, Savelyev and Shelukhin showed that the Dixmier-Douady class of any Hamiltonian fibration is torsion.
  Here the Hamiltonian fibration is a fiber bundle whose structure group can be reduced to the Hamiltonian diffeomorphism group.
  The above proposition gives another proof of this fact in the case when the fiber is one-connected and the fiber $B$ has homology of finite type.
  For example, it is known that the transgression map $d_3^{0,2}$ is equal to zero in $\mathbb{R}$-coefficients Serre spectral sequence of any Hamiltonian fibration (\cite{lalonde_mcduff03}).
  Thus the class $d_3^{0,2}[\omega]_{\mathbb{Z}}$ is trivial in $H^3(B;\mathbb{R})$.
  This implies that the Dixmier-Douady class $D(E) = d_3^{0,2}[\omega]_{\mathbb{Z}}$ is torsion if $B$ has homology of finite type.
\end{remark}
\end{comment}

%\section{Proof of Theorem \ref{main_theorem}}\label{sec:cocycle_DD}
%To prove Theorem \ref{main_theorem}, we prepare the following lemma.

Let $\Symp(X,\omega) \to E\Symp(X,\omega) \to B\Symp(X,\omega)$ be the universal $\Symp(X,\omega)$-bundle and
\[
  E = E\Symp(X, \omega) \times_{\Symp(X, \omega)} X
\]
the Borel construction.
Note that $E$ is a total space of a symplectic fibration;
\begin{align}\label{universal_symp_fibration}
  X \to E \to B\Symp(X, \omega).
\end{align}

\begin{definition}\label{def:DD_class}
  The Dixmier-Douady class $D(E) \in H^3(B\Symp(X,\omega);\ZZ)$ of fibration (\ref{universal_symp_fibration}) is called the \textit{universal Dixmier-Douady class of symplectic fibrations} and denoted by $D$.
  The class $D^{\delta} = \iota^* D \in H_{\grp}^3(\Symp(X,\omega);\ZZ)$ is called the \textit{universal Dixmier-Douady class of flat symplectic fibrations}, where $\iota^* \colon H^3(B\Symp(X,\omega);\ZZ) \to H_{\grp}^3(\Symp(X,\omega);\ZZ)$ is the canonical map (see Subsection \ref{subsec:group_coh}).
\end{definition}

\begin{lemma}
  Let $(X, \omega)$ be an integral symplectic manifold and
  \[
    BS^1 \to BQ \to B\Symp(X,\omega)
  \]
  the fibration which corresponds to the central $S^1$-extension (\ref{prequantum_extension}).
  Then, there exists a commutative diagram of fibrations
  \begin{align}\label{diagram:MtoBS^1}
    \xymatrix{
    X \ar[r] \ar[d]^{f} &E \ar[r] \ar[d]^{\phi} &B\Symp(X,\omega) \ar@{=}[d]\\
    BS^1 \ar[r] &BQ \ar[r] &B\Symp(X, \omega),
    }
  \end{align}
  where the map $f\colon X \to BS^1$ is a classifying map of the prequantization $S^1$-bundle $p\colon P \to X$.
\end{lemma}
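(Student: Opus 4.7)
The plan is to realize $\phi \colon E \to BQ$ as the classifying map of a concrete principal $Q$-bundle on $E$ built from the prequantization data. The crucial observation is that, although $\Symp(X, \omega)$ does not itself act on the prequantization bundle $P$, the group $Q$ does (by quantomorphisms), and the restriction of this action to the central subgroup $S^1 \subset Q$ coincides with the principal $S^1$-action on $P \to X$.

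Concretely, I would equip the product $E\Symp(X, \omega) \times P$ with the right $Q$-action
\[
  (e, \tilde{x}) \cdot q = \bigl(e \cdot \pi(q),\ q^{-1} \cdot \tilde{x}\bigr),
\]
where $\pi \colon Q \to \Symp(X, \omega)$ is the projection and $Q$ acts on $P$ via quantomorphisms. This action is free: $\Symp(X, \omega)$ acts freely on $E\Symp(X, \omega)$, and the restriction to the kernel $S^1$ of $\pi$ is the free principal $S^1$-action on $P$. Quotienting by $S^1 \subset Q$ yields $E\Symp(X, \omega) \times X$, and a further quotient by $\Symp(X, \omega) = Q/S^1$ yields $E\Symp(X, \omega) \times_{\Symp(X, \omega)} X = E$. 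Hence $E\Symp(X, \omega) \times P \to E$ is a principal $Q$-bundle, whose classifying map I take as $\phi$.

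For the right square, reducing this $Q$-bundle along $\pi$ (i.e., quotienting by $S^1$) gives the $\Symp(X, \omega)$-bundle $E\Symp(X, \omega) \times X \to E$, which is canonically the pullback of the universal $\Symp(X, \omega)$-bundle along the projection $E \to B\Symp(X, \omega)$; this shows $\phi$ covers the identity on $B\Symp(X, \omega)$. For the left square, I would restrict to a fiber $X \hookrightarrow E$: choosing a point $e_0 \in E\Symp(X, \omega)$ above the basepoint, the restricted $Q$-bundle is identified with $\Symp(X, \omega) \times P \to X$ under the map $(g, \tilde{x}) \mapsto g \cdot p(\tilde{x})$. Its $\Symp(X, \omega)$-reduction $\Symp(X, \omega) \times X \to X$ is trivial and admits the canonical section $x \mapsto (1, x)$; pulling back the $S^1$-extension $\Symp(X, \omega) \times P \to \Symp(X, \omega) \times X$ along this section recovers the prequantization bundle $P \to X$. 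Consequently the classifying map of the restricted $Q$-bundle factors as $X \xrightarrow{f} BS^1 \hookrightarrow BQ$, giving commutativity of the left square.

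The main technical point I expect to track carefully is the interaction between the left quantomorphism action of $S^1 \subset Q$ and the right principal $S^1$-action on $P$: the freeness of the $Q$-action in the construction and the identification of the twisting $S^1$-bundle on each fiber both rest on the fact that these two $S^1$-actions agree on $S^1$-orbits of $P$.
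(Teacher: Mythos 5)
Your proposal is correct and follows essentially the same route as the paper: both construct the principal $Q$-bundle $E\Symp(X,\omega)\times P \to E$ from the diagonal $Q$-action (using that the central $S^1\subset Q$ acts on $P$ by the principal action), take $\phi$ to be its classifying map, and identify the fiberwise restriction with a classifying map of $P\to X$. The only cosmetic difference is that the paper builds an explicit model $E\Symp(X,\omega)\times_{Q}EQ$ of $BQ$ to realize the diagram strictly, whereas you verify the two squares up to homotopy via pullbacks and a section of the trivialized $\Symp(X,\omega)$-reduction.
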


\begin{proof}
  Since the quantomorphism group $Q$ acts on $E\Symp(X, \omega)$ through the $\Symp(X,\omega)$-action, it also acts on $E\Symp(X,\omega) \times P$ diagonally.
  Let $E\Symp(X,\omega) \times_{Q} P$ denote the quotient.
  Note that the projection $E\Symp(X,\omega) \times P \to E\Symp(X,\omega) \times X$ induces a homeomorphism
  \[
    E\Symp(X, \omega) \times_Q P \to E = E\Symp(X,\omega) \times_{\Symp(X,\omega)} X.
  \]
  %Thus the bundle
  %\[
  %  E = E\Symp(X,\omega) \times_{Q} P \to B\Symp(X,\omega)
  %\]
  %gives one of the models of the universal symplectic fibation.
  Therefore, in this proof, we abuse the symbol $E$ to denote the space $E\Symp(X, \omega) \times_Q P$.
  Since $E\Symp(X,\omega) \times P \to E$ is a principal $Q$-bundle, there exists a commutative diagram
  \[
  \xymatrix{
  E\Symp(X,\omega) \times P \ar[r]^-{\Psi} \ar[d] &EQ \ar[d]\\
  E \ar[r]^-{\psi} & BQ,
  }
  \]
  where $EQ \to BQ$ is a universal $Q$-bundle and $\psi$ is a classifying map.

  Let $E\Symp(X,\omega) \times_{Q} EQ$ denotes the quotient of $E\Symp(X,\omega) \times EQ$ by the diagonal $Q$-action.
  Since the space $E\Symp(X,\omega) \times EQ$ is contractible, the principal $Q$-bundle
  \[
    E\Symp(X,\omega) \times EQ \to E\Symp(X,\omega) \times_{Q} EQ
  \]
  gives another model of the universal $Q$-bundles, and therefore the space $E\Symp(X,\omega) \times_{Q} EQ$ is one of the model of $BQ$.

  Let us consider a map
  \[
    \Phi \colon E\Symp(X,\omega) \times P \to E\Symp(X,\omega) \times EQ \ ; \  (a, p) \mapsto (a, \Psi(a,p)).
  \]
  Note that the map $\Phi$ is a bundle map of the $Q$-bundles, that is, $\Phi$ preserves the fivers and is equivariant with the $Q$-actions.
  Hence it induces a map
  \[
    \phi \colon E \to E\Symp(X,\omega) \times_{Q} EQ = BQ.
  \]
  Since the fibers of $E \to B\Symp(X,\omega)$ and $BQ \to B\Symp(X, \omega)$ are $P/S^1 = X$ and $EQ/S^1 = BS^1$, respectively.
  Therefore the restriction of the map $\phi$ to the fiber induces a map $f \colon X = P/S^1 \to EQ/S^1 = BS^1$.
  Since $f$ is covered by a bundle map $F \colon P \to EQ$ defined by $F(p) = \Psi(e, p)$, where $e \in E\Symp(X, \omega)$ is a base point, the map $f$ is a classifying map.
\end{proof}

\begin{proof}[Proof of Theorem \ref{main_theorem}]
  %Take a commutative diagram (\ref{diagram:MtoBS^1}).
  Let $E_{r}^{p, q}$ and $E_{r}^{'p,q}$ be the Serre spectral sequences of the fibrations $X \to E \to B\Symp(X,\omega)$ and $BS^1 \to BQ \to B\Symp(X,\omega)$ in commutative diagram (\ref{diagram:MtoBS^1}), respectively.
  Note that the space $BS^1$ has a topological group structure and the fibration $BS^1 \to BQ \to B\Symp(X,\omega)$ has a principal $BS^1$-bundle structure (see \cite[Proposition 4.1]{carey_crowley_murray98} for example).
  Since the structure group $BS^1$ is connected, the local system $\mathcal{H}^*(BS^1;\mathbb{Z})$ is trivial.
  Hence we have $E_2^{'0,2} = H^2(BS^1;\mathbb{Z})$.
  Since $H^1(BS^1;\mathbb{Z}) = 0$, we have $E_3^{'0,2} = E_2^{'0,2} = H^2(BS^1;\mathbb{Z})$ and $E_3^{'3,0} = E_2^{'3,0} = H^3(B\Symp(X,\omega);\mathbb{Z})$.
  By the naturality of the Serre spectral sequence, we obtain a commutative diagram
  \begin{align}\label{coh_comm_diag}
    \xymatrix{
    H^2(BS^1;\mathbb{Z}) \ar[r]^-{d_3^{'0,2}} \ar[d]^{f^*}& H^3(B\Symp(X,\omega);\mathbb{Z}) \ar@{=}[d]\\
    H^2(X; \mathbb{Z}) \ar[r]^-{d_3^{0,2}} &H^3(B\Symp(X,\omega);\mathbb{Z}).
    }
  \end{align}
  Since $-[\omega]_{\mathbb{Z}}$ is equal to the first Chern class of the prequantization bundle $P \to X$, we have $-[\omega]_{\mathbb{Z}} = f^*(c_1)$, where $c_1 \in H^2(BS^1;\mathbb{Z})$ is the universal first Chern class.
  By commutative diagram \ref{coh_comm_diag}, we have
  \[
  -d_3^{0,2}([\omega]_{\mathbb{Z}}) = d_3^{0,2} f^*(c_1) =
  d_3^{'0,2}(c_1) \in H^3(B\Symp(X,\omega);\mathbb{Z}).
  \]
  Let $E_{r}^{''p, q}$ denote the Serre spectral sequence of the fibration ${BS^1}^{\delta} \to BQ^{\delta} \to B\Symp(X,\omega)^{\delta}$ (or, equivalently, the Hochschild-Serre spectral sequence of the central $S^1$-extension $0 \to S^1 \to Q \to \Symp(X,\omega) \to 1$).
  Since $H^1({BS^1}^{\delta};\mathbb{Z}) = \Hom(S^1, \ZZ) = 0$, we have $E_3^{''0,2} = E_2^{''0,2} =  H^2({BS^1}^{\delta};\mathbb{Z})$.
  By the naturality of the Serre spectral sequence for the fibrations
  \[
  \xymatrix{
  {BS^1}^{\delta} \ar[r] \ar[d] & BQ^{\delta} \ar[r] \ar[d] & B\Symp(X,\omega)^{\delta} \ar[d]\\
  BS^1 \ar[r] & BQ \ar[r] & B\Symp(X,\omega),
  }
  \]
  we have a commutative diagram
  \[
  \xymatrix{
  H^2(BS^1;\mathbb{Z}) \ar[r]^-{d_3^{'0,2}} \ar[d]^{\iota^*}& H^3(B\Symp(X,\omega);\mathbb{Z}) \ar[d]^{\iota^*}\\
  H^2({BS^1}^{\delta}; \mathbb{Z}) \ar[r]^-{d_3^{''0,2}} &H^3(B\Symp(X,\omega)^{\delta};\mathbb{Z}).
  }
  \]
  Hence we obtain
  \[
    -\iota^* d_3^{0,2}([\omega]_{\mathbb{Z}}) = \iota^* d_3^{'0,2} (c_1) = d_{3}^{''0,2} \iota^* (c_1).
  \]
  Since the map $\iota\colon{BS^1}^{\delta} \to BS^1$ is a classifying map of the $S^1$-bundle $B\mathbb{Z}=S^1 \to B\mathbb{R^{\delta}} \to {BS^1}^{\delta}$, the class $\iota^* (c_1)$ is the first Chern class of the $S^1$-bundle.
  Hence the class $\iota^* (c_1)$ is equal to the obstruction class $\mathfrak{o}(\id, B\mathbb{R}^{\delta})$.
  Lemma \ref{lemma:obstruction-euler_class} for the central $\mathbb{Z}$-extension $0 \to \mathbb{Z} \to \mathbb{R} \to S^1 \to 0$ implies that the equality $\mathfrak{o}(\id, B\mathbb{R}^{\delta}) = e(\mathbb{R})$ holds.
  Hence we obtain
  \[
    -\iota^* d_3^{0,2}([\omega]_{\mathbb{Z}}) = d_{3}^{''0,2} \iota^* (c_1) = d_3^{''0,2} (e(\RR)).
  \]
  Together with Proposition \ref{prop:dd_def_ss} and Lemmas \ref{lemma:identity-Euler_class}, \ref{lemma:id_connecting_hom}, and \ref{lemma:commutative_diagram}, we obtain
  \begin{align*}
    D^{\delta} = -\iota^* d_3^{0,2}([\omega]_{\ZZ}) = d_{3}^{''0,2} (e(\mathbb{R}))= -d_3^{''0,2} \delta (\id_{S^1}) = \delta d_2^{''0,2}(\id_{S^1}) = -\delta (e(Q)).
  \end{align*}
  By Theorems \ref{thm:third_cocycle} and \ref{thm:cocycle_description}, we obtain
  \[
    D^{\delta} = -\delta (e(Q)) = \delta [a_{x_0, \alpha}] = -[c_{x_0, \sigma, w}].
  \]
\end{proof}

\section{Example}\label{sec:examples}

In this section, we show the following:
\begin{theorem}\label{thm:non-trivial_ch_class}
  Let $n$ be a positive integer.
  For the complex projective space $\mathbb{C}P^n$ with the Fubini-Study form $\omega_{FS}$, the cohomology class of $c_{x_0, \sigma, w}$ is non-zero.
\end{theorem}

Let us consider the central $S^1$-extension of the projective unitary group
\begin{equation}\label{proj_unitary_extension}
  0 \to S^1 \to U(n) \to PU(n) \to 1,
\end{equation}
where we regard $S^1$ as the unitary group $U(1)$.

\begin{lemma}\label{lem:PU(n)_bdd_coh}
  Let $e(PU(n)) \in H_{\grp}^2(PU(n);S^1)$ be the cohomology class corresponding to the central extension (\ref{proj_unitary_extension}).
  Then the class $\delta e(PU(n)) \in H_{\grp}^3(PU(n);\mathbb{Z})$ is non-zero.
  Here $\delta \colon H_{\grp}^2(PU(n);S^1) \to H_{\grp}^3(PU(n);\ZZ)$ is the connecting homomorphism.
\end{lemma}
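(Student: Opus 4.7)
The plan is to reduce the claim to the non-vanishing of a standard characteristic class on $BPU(n)$ and then transfer back via Milnor's theorem. Since $PU(n)$ is a connected finite-dimensional Lie group, Theorem \ref{theorem:milnor_made_discrete} tells us that $\iota^* \colon H^3(BPU(n);\ZZ) \to H_{\grp}^3(PU(n);\ZZ)$ is injective, so it suffices to exhibit a non-zero class in $H^3(BPU(n);\ZZ)$ whose image under $\iota^*$ equals (up to sign) $\delta e(PU(n))$.

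The candidate is the transgression of the universal first Chern class in the Serre spectral sequence of the principal $BS^1$-fibration $BS^1 \to BU(n) \to BPU(n)$ associated with the principal $S^1$-bundle $U(n) \to PU(n)$. Let $c_1 \in H^2(BS^1;\ZZ)$ be the universal first Chern class. Since $H^1(BS^1;\ZZ) = 0$, the class $c_1$ survives to $E_3^{0,2}$, and I would set $D_n := d_3^{0,2}(c_1) \in H^3(BPU(n);\ZZ)$. This is the classical obstruction to lifting a $PU(n)$-bundle to a $U(n)$-bundle; it has exact order $n$ by well-known computations, so $D_n \neq 0$ for $n \geq 2$. (The case $n=1$ is vacuous, and in the intended application to $\mathbb{C}P^n$ only $PU(n+1)$ with $n \geq 1$ is used.) A quick sanity check: for $n = 2$ we have $PU(2) = SO(3)$ and $D_2$ equals the Bockstein $\beta(w_2) \in H^3(BSO(3);\ZZ) = \ZZ/2$, which is non-zero.

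The identification $\iota^*(D_n) = -\delta e(PU(n))$ is carried out exactly as in the closing diagram chase of the proof of Theorem \ref{main_theorem}, with the prequantum extension there replaced by the central extension \eqref{proj_unitary_extension}. Naturality of the Serre spectral sequence under $\iota$ gives $\iota^* d_3^{0,2}(c_1) = d_3^{''0,2}(\iota^* c_1)$, where $d_3^{''0,2}$ denotes the transgression in the Hochschild-Serre spectral sequence of \eqref{proj_unitary_extension}. Lemma \ref{lemma:obstruction-euler_class} applied to $0 \to \ZZ \to \RR \to S^1 \to 0$ identifies $\iota^* c_1$ with $e(\RR) \in H_{\grp}^2(S^1;\ZZ)$; Lemma \ref{lemma:id_connecting_hom} rewrites $e(\RR) = -\delta(\id_{S^1})$; Lemma \ref{lemma:commutative_diagram} commutes the Bockstein past the transgression; and Lemma \ref{lemma:identity-Euler_class} produces $d_2^{''0,1}(\id_{S^1}) = -e(PU(n))$. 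Concatenating these yields $\iota^*(D_n) = -\delta e(PU(n))$, which combined with Milnor's injectivity of $\iota^*$ and the non-vanishing of $D_n$ completes the argument.

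The main obstacle is the bookkeeping in the third paragraph: two distinct spectral sequences (the topological Serre sequence of $BS^1 \to BU(n) \to BPU(n)$ and the Hochschild-Serre sequence of \eqref{proj_unitary_extension}), a Bockstein of a coefficient sequence, and several sign conventions must be reconciled simultaneously. However, the computation is a verbatim re-interpretation of the closing chase in the proof of Theorem \ref{main_theorem}, so no genuinely new content is needed.
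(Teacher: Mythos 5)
Your proposal is correct and follows the same overall route as the paper: reduce $\delta e(PU(n))$ to the transgression $d_3^{0,2}(c_1)$ in the Serre spectral sequence of $BS^1 \to BU(n) \to BPU(n)$ via the chain of Lemmas \ref{lemma:identity-Euler_class}, \ref{lemma:obstruction-euler_class}, \ref{lemma:id_connecting_hom}, and \ref{lemma:commutative_diagram}, and then invoke Theorem \ref{theorem:milnor_made_discrete} to descend the non-vanishing from $H^3(BPU(n);\ZZ)$ to $H_{\grp}^3(PU(n);\ZZ)$. The one place where you genuinely diverge is the final non-vanishing input: you cite the well-known computation that the lifting obstruction generates $H^3(BPU(n);\ZZ)\cong\ZZ/n$ and hence has exact order $n$ (with the $SO(3)$ sanity check for $n=2$), whereas the paper keeps the argument self-contained by comparing with the universal bundle $PU(n)\to EPU(n)\to BPU(n)$: there the class $c_1$ pulls back to the first Chern class $f^*c_1\in H^2(PU(n);\ZZ)$ of the circle bundle $U(n)\to PU(n)$, which is non-zero because $\pi_1(U(n))\neq\pi_1(S^1\times PU(n))$, and the transgression $d_3^{0,2}$ in that spectral sequence is injective since $EPU(n)$ is contractible (and $E_2^{p,1}=0$). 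Your version is shorter but imports an external fact and yields the stronger order-$n$ statement; the paper's version needs only the elementary $\pi_1$ observation. You also correctly flag that $n\geq 2$ is needed (the lemma is vacuous, indeed false, for $n=1$), a hypothesis the paper leaves implicit; and your sign $\iota^*(D_n)=-\delta e(PU(n))$ is consistent with the sign chain in the proof of Theorem \ref{main_theorem} (the paper's own proof of the lemma drops this sign, harmlessly).
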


\begin{proof}
  By Lemmas \ref{lemma:identity-Euler_class}, \ref{lemma:obstruction-euler_class}, \ref{lemma:id_connecting_hom}, and \ref{lemma:commutative_diagram}, we have
  \[
    \delta e(PU(n)) = d_3^{0,2}(\mathfrak{o}(\id, B\RR^{\delta})),
  \]
  where $d_3^{0,2}$ is the transgression map of the Serre spectral sequence of the fibration
  \[
    B{S^1}^{\delta} \to BU(n)^{\delta} \to BPU(n)^{\delta}.
  \]
  Let us consider a commutative diagram of fibrations
  \[
  \xymatrix{
  PU(n) \ar[r] \ar[d]^-{f} & EPU(n) \ar[r] \ar[d] & BPU(n) \ar@{=}[d] \\
  BS^1 \ar[r] & BU(n) \ar[r] & BPU(n) \\
  B{S^1}^{\delta} \ar[u]_-{\iota} \ar[r] & BU(n)^{\delta} \ar[u] \ar[r] & BPU(n)^{\delta} \ar[u]_-{\iota},
  }
  \]
  where $f$ is a classifying map of the $S^1$-bundle (\ref{proj_unitary_extension}).
  Let $c_1 \in H^2(BS^1;\ZZ)$ be the universal first Chern class.
  By the equality $\iota^* c_1 = \mathfrak{o}(\id,B{S^1}^{\delta})$ and the naturality, we obtain
  \[
    \delta e(PU(n)) = d_3^{0,2}(\mathfrak{o}(\id, B\RR^{\delta})) = d_3^{0,2}(\iota^* c_1) = \iota^*(d_3^{'0,2} c_1),
  \]
  where $d_3^{'0,2}$ is the transgression map of the Serre spectral sequence of $BS^1 \to BU(n) \to BPU(n)$.

  The $S^1$-bundle (\ref{proj_unitary_extension}) is non-trivial since the fundamental groups of $U(n)$ and $S^1 \times PU(n)$ are different.
  Hence the first Chern class $f^* c_1 \in H^2(PU(n);\mathbb{Z})$ of the bundle (\ref{proj_unitary_extension}) is non-zero.
  Let $E_r^{''p,q}$ be the Serre spectral sequence of the universal bundle
  \[
    PU(n) \to EPU(n) \to BPU(n).
  \]
  By the naturality between $E_r^{'p,q}$ and $E_r^{''p,q}$, we have
  \[
    \delta e(PU(n)) = \iota^*(d_3^{'0,2} c_1) = \iota^*(d_3^{''0,2} (f^* c_1)).
  \]
  Since the total space $EPU(n)$ is contractible, the transgression map $d_3^{''0,2} \colon E_3^{0,2} \to E_3^{3,0}$ is injective.
  Moreover, by Theorem \ref{theorem:milnor_made_discrete}, the map $\iota^* \colon H^3(BPU(n);\mathbb{Z}) \to H^3(BPU(n)^{\delta};\mathbb{Z})$
  is also injective.
  Hence the class $\delta e(PU(n)) = \iota^*(d_3^{''0,2} (f^* c_1)) \in H_{\grp}^3(PU(n);\mathbb{Z})$ is non-zero.
\end{proof}

Let $(X, \omega)$ be the complex projective space $\mathbb{C}P^n$
with the Fubini-Study form
$\omega_{\FS}$.
For this symplectic manifold $(\mathbb{C}P^n, \omega_{FS})$,
its prequantization bundle is the Hopf fibration
\[
  S^1 \to S^{2n + 1} \xrightarrow{p} \mathbb{C}P^n
\]
with the connection form
$\theta = \overline{z_0}dz_0 + \dots + \overline{z_n}dz_n$, where
we consider the sphere $S^{2n+1}$ as the subspace
in $\mathbb{C}^{n+1}$ with coordinate system
$(z_0, \dots, z_n)$.
%Let us recall that there is a central $S^1$-extension (\ref{prequantum_extension})
%\[
%  0 \to S^1 \to Q \to \Symp(\mathbb{C}P^n, \omega_{FS}) \to 1,
%\]
%where the group $Q$ is the identity component of the quantomorphism group.

\begin{proof}[Proof of Lemma \ref{thm:non-trivial_ch_class}]
  By Theorem \ref{main_theorem}, it suffices to show that the cohomology class $\delta e(Q) \in H_{\grp}^3(\Symp(\mathbb{C}P^n, \omega_{FS}); \mathbb{Z})$ is non-zero.

  Since the $U(n+1)$-action on $S^{2n+1}$ preserves the connection form $\theta$, the unitary group is contained in $Q$.
  Since the inclusion is $S^1$-equivariant, we have a commutative diagram
  \[
  \xymatrix{
  1 \ar[r] & S^1 \ar[r] \ar@{=}[d] & U(n+1) \ar[r] \ar[d] & PU(n+1) \ar[d]^{f} \ar[r] & 1 \\
  1 \ar[r] & S^1 \ar[r] & Q \ar[r] & \Symp(\mathbb{C}P^n,\omega_{FS}) \ar[r]& 1.
  }
  \]
  Hence we have $e(PU(n+1)) = f^*e(Q) \in H_{\grp}^2(PU(n+1);S^1)$.
  Let us consider a commutative diagram
  \[
  \xymatrix{
  H_{\grp}^2(\Symp(\mathbb{C}P^n,\omega_{FS});S^1) \ar[r]^-{f^*} \ar[d]^{\delta}
  & H_{\grp}^2(PU(n+1);S^1) \ar[d]^{\delta} \\
  H_{\grp}^3(\Symp(\mathbb{C}P^n,\omega_{FS});\mathbb{Z}) \ar[r]^-{f^*}
  & H_{\grp}^3(PU(n+1);\mathbb{Z}),
  }
  \]
  where each $\delta$ denotes the connecting homomorphism.
  Then we obtain
  \[
    f^* \delta e(Q) = \delta f^* e(Q) = \delta e(PU(n+1)).
  \]
  By Lemma \ref{lem:PU(n)_bdd_coh}, the last term $\delta e(PU(n+1))$ is non-zero.
  Therefore, the class $\delta e(Q) \in H_{\grp}^3(\Symp(\mathbb{C}P^n,\omega_{FS});\mathbb{Z})$ is non-zero.
\end{proof}

\section*{Acknowledgements}
The author would like to thank Professor Egor Shelukhin for telling him about the Dixmier-Douady class of Hamiltonian fibrations.
The author is supported by JSPS KAKENHI Grant Number JP21J11199.

\bibliographystyle{amsalpha}
\bibliography{ch_class_Ham_fibration.bib}
\end{document}